\documentclass[12pt]{article}
\usepackage{geometry}
\usepackage{amssymb}
\usepackage{amsmath}
\usepackage{float}
\usepackage{color}
\usepackage{graphicx}
\usepackage{subfigure}
\usepackage{tabu}
\usepackage{booktabs}
\usepackage{array}
\usepackage{caption}
\usepackage{amsthm}
\usepackage{booktabs}
\usepackage{cite}
\usepackage{indentfirst}
\numberwithin{equation}{section}
\newtheorem{theorem}{\bf Theorem}[section]
\usepackage[noend]{algpseudocode}
\usepackage{algorithmicx,algorithm}
\newtheorem{lemma}{Lemma}[section]
\newtheorem{example}{Example}[section]

\newtheorem{Definition}{Definition}[section]
\newtheorem{Corollary}{Corollary}[section]
\newcommand{\xqedhere}[2]{%
	\rlap{\hbox to#1{\hfil\llap{\ensuremath{#2}}}}}
\textwidth=6.2in
\textheight=8.5in
\oddsidemargin=-0.25in
\evensidemargin=-0.25in
\baselineskip=6pt
\parindent0.0in
\parskip6pt
\topmargin=-0.5in
\topskip =0 in
\footskip=0.5in
\hoffset = 0.0in
\voffset = 0.0in
\geometry{a4paper,left=3.0cm,right=3.0cm,top=2.5cm,bottom=3.0cm}
\makeatletter
\@addtoreset{equation}{section}
\makeatother

\title{On some tensor inequalities based on the t-product}
\author{
	Zhengbang Cao\footnote{School of Mathematical Sciences, Ocean University of China, Qingdao 266100, China.
		E-Mail: {\tt caozhengbang@stu.ouc.edu.cn}},
	Pengpeng Xie\footnote{Corresponding author: School of Mathematical Sciences, Ocean University of China, Qingdao 266100, China.
		E-Mail: {\tt xie@ouc.edu.cn}.
		The work of this author is supported by the NSFC grant No. 11801534.
	}
}
\date{}
\begin{document}
	\maketitle
	\begin{abstract}
In this work, we investigate the tensor inequalities in the tensor t-product formalism.
The inequalities involving tensor power are proved to hold similarly as standard matrix scenarios.
We then focus on the tensor norm inequalities. The well-known arithmetic-geometric mean inequality,
H{\" o}lder inequality, and Minkowski inequality are generalized to tensors. Furthermore, we obtain some t-eigenvalue inequalities.
\\ \hspace*{\fill} \\
{\bf Key words:} t-positive semidefiniteness; tensor power; tensor norm inequality; t-eigenvalue
	\end{abstract}	
\section{Introduction}
\hskip 2em Tensors are higher-order extensions of matrices. An order $m$ tensor can be regarded as a multidimensional array, which takes the form
\begin{equation*}
	\mathcal{A}=(a_{i_{1}\cdots i_{m}})\in \mathbb{R}^{n_1\times n_2\times \cdots \times n_m}.
\end{equation*}
In this paper, we mainly focus on tensors of order three.
The well-known representations of tensors are the CANDECOMP/PARAFAC \cite{Carroll1970} and Tucker models \cite{Tucker1966}.
In the last ten years, the t-product \cite{Kilmer2011,Braman2010} has been introduced as a generalization of matrix multiplication for tensors.
 It is shown that a discrete Fourier transform can be performed along the tube fibers of each tensor to compute the t-product efficiently.
The tensor operation of the t-product has been proved to be a useful tool in many
areas, finding applications in image processing \cite{Martin2013,Tarzanagh2018},
signal processing \cite{Liu2018,Sun2019}, and tensor compression \cite{Zhang2014Novel}, to name but a few.

\hskip 2em Based on the t-product framework , Lund \cite{Lund2021} posed the definition for the tensor t-function.
Later, Miao et al. \cite{Miao2020} defined the generalized tensor t-function by the tensor singular value decomposition.
After that, the authors in \cite{Zheng2021} presented the definition of the t-positive (semi)definiteness of third-order symmetric tensors.
In order to motivate further the development of the  tensor analysis, one topic we are interested in is the results specific to particular t-functions.
We will first concentrate on the tensor power including the square root of the symmetric t-positive semidefinite tensors.
The other topic of interest is the tensor t-eigenvalue.
A recent study of t-eigenvalues introduced some t-eigenvalue inequalities for Hermitian tensor
such as Weyl's theorem and Cauchy's interlacing theorem \cite{Liu_Jin2021}.

\hskip 2em As an important research field of scientific computing, matrix inequalities reflect the quantitative aspect of matrix analysis. Much work has been carried out to the development of matrix equalities \cite{Horn1991,Zhan2002}. So it is natural to talk about the tensor inequalities. In fact, Chang has built some inequalities in the aspects regarding trace function, Golden-Thompson inequality, Jenson's inequality and Klein's inequality, et al., for the t-product tensors \cite{Chang2021_1,Chang2021_2}. In this paper, we will generalize some inequalities of matrix power, matrix norm inequalities and several classical eigenvalue inequalities to tensors.

\hskip 2em This paper is organized as follows. In section 2, we review basic definitions and notations. Section 3 details the inequalities of tensor power. By the properties of the tensor spectral and Frobenius
norm, results for the norm inequalities of tensor functions are presented in section 4. Section 5 studies the tensor t-eigenvalue inequalities.  We give a conclusion in section 6.

\section{Preliminaries}
\hskip 2em In this section, we review the t-product introduced by Kilmer et al.\cite{Kilmer2013,Kilmer2011} and give some needed notations.
Throughout this paper,	third-order tensors denoted by calligraphic script letters 
are considered. Capital letters
refer to matrices, and lower case letters to vectors. The $i$th frontal slice of tensor $\mathcal{A}$ will be denoted by $A^{(i)}$. 
For Hermitian matrices $G, H$ and symmetric tensors $\mathcal{A}$ and $\mathcal{B}$, we write $G\le H$ and $\mathcal{A}\le\mathcal{B}$ to mean that $H-G$ is Hermitian positive semidefinite and $\mathcal{B}-\mathcal{A}$ is a symmetric t-positive semidefinite tensor. In particular, $H>(\ge)0$ and $\mathcal{A}>(\ge)0$ indicate that $H$ is Hermitian positive (semi)definite and $\mathcal{A}$ is a symmetric t-positive (semi)definite tensor respectively.

\hskip 2em For $\mathcal{A}\in \mathbb{R}^{n_1\times n_2 \times n_3}$, define \texttt{bcirc} as a block circulant matrix of size $n_1n_3\times n_2n_3$
\begin{equation*}
	\texttt{bcirc}(\mathcal{A})=\begin{bmatrix}
		A^{(1)}&A^{(n_3)}&\cdots&A^{(2)}\\
		A^{(2)}&A^{(1)}&\cdots&A^{(3)}\\
		\vdots&\vdots&\ddots&\vdots\\
		A^{(n_3)}&A^{(n_3-1)}&\cdots&A^{(1)}
	\end{bmatrix}.
\end{equation*}
The command \texttt{unfold} reshapes a tensor $\mathcal{A}\in \mathbb{R}^{n_1\times n_2 \times n_3}$ into an $n_1n_3\times n_2$ block-column vector (the first block-column of $\texttt{bcirc}(\mathcal{A}))$, while \texttt{fold} is the inverse, i.e., $\texttt{fold}(\texttt{unfold}(\mathcal{A}))=\mathcal{A}$.
\begin{Definition}{(t-product) \cite{Kilmer2013}}
	Let $\mathcal{A}\in \mathbb{R}^{n_1\times n_2 \times n_3}$ and $\mathcal{B}\in \mathbb{R}^{n_2\times n_4 \times n_3}$. The t-product $\mathcal{A}*\mathcal{B}$ is the tensor $\mathcal{C}\in \mathbb{R}^{n_1\times n_4 \times n_3}$ defined by
	\begin{equation*}
		\mathcal{C}=\mathtt{fold}(\mathtt{bcirc}(\mathcal{A})\cdot \mathtt{unfold}(\mathcal{B})).
	\end{equation*}	
\end{Definition}
\hskip 2em Note that the t-product reduces to the standard matrix multiplication when $n_3=1$.
The Discrete Fourier Transformation (DFT) plays a core role in tensor-tensor product. 
The DFT on $v\in \mathbb{R}^n$, denoted as $\bar{v}$, is given by
$\bar{v}=F_nv\in \mathbb{C}^n.$
Here $F_n$ is the DFT matrix
\begin{equation*}
	F_n=
	\begin{bmatrix}
		1&1&1&\cdots&1\\
		1&\omega&\omega^2&\cdots&\omega^{n-1}\\
		\vdots&\vdots&\vdots&\ddots&\vdots\\
		1&\omega^{n-1}&\omega^{2(n-1)}&\cdots&\omega^{(n-1)(n-1)}
	\end{bmatrix},
\end{equation*}
where $\omega=e^{\frac{-2\pi \mathtt i}{n}}$ is a primitive $n$th root of unity with  $\texttt{i}=\sqrt{-1}$ and $F_n$ satisfies $F_n^{*}F_n=F_nF_n^{*}=nI_n.$ The block circulant matrix can be block diagonalized by the DFT, i.e.,
\begin{equation}\label{operator}
	(F_{n_3}\otimes I_{n_1})\cdot\texttt{bcirc}(\mathcal{A})\cdot(F_{n_3}^{-1}\otimes I_{n_2})=\bar{A},
\end{equation}
where $\otimes$ denotes the Kronecker product and 	$\bar{A}=\texttt{diag}(\bar{A}^{(1)},\bar{A}^{(2)},\ldots,\bar{A}^{(n_3)})$. By taking the Fast Fourier Transform (FFT) along each tubal scalar of $\mathcal{A}$, $\bar{\mathcal{A}}=\texttt{fold}(\bar{A})=\texttt{fft}(\mathcal{A},[],3)$ and $\mathcal{A}=\texttt{ifft}(\bar{\mathcal{A}},[],3).$ Then we have the following lemma.
\begin{lemma}\label{relation}\cite{Kilmer2013}
For $\mathcal{A}$, $\mathcal{B}$ and $\mathcal{C}$ of appropriate size, the following statements hold:
\begin{equation*}
\mathcal{C}=\mathcal{A}*\mathcal{B} \iff \bar{C}=\bar{A}\cdot\bar{B},
\end{equation*}
\begin{equation*}
	\mathcal{C}=\mathcal{A}+\mathcal{B} \iff \bar{C}=\bar{A}+\bar{B}.
\end{equation*}
\end{lemma}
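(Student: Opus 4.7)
The plan is to read off both equivalences directly from the block diagonalization identity (\ref{operator}), using the fact that the operator $\texttt{bcirc}$ converts the t-product into ordinary matrix multiplication of block circulants.

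For the first equivalence, I would first upgrade the defining relation $\texttt{unfold}(\mathcal{C}) = \texttt{bcirc}(\mathcal{A})\cdot\texttt{unfold}(\mathcal{B})$ to the full matrix identity
\begin{equation*}
\texttt{bcirc}(\mathcal{C}) = \texttt{bcirc}(\mathcal{A})\cdot\texttt{bcirc}(\mathcal{B}).
\end{equation*}
This multiplicativity holds because the $k$th block column of $\texttt{bcirc}(\mathcal{B})$ is obtained from $\texttt{unfold}(\mathcal{B})$ by the same cyclic shift in the third mode, and the block circulant structure of $\texttt{bcirc}(\mathcal{A})$ commutes with that shift; the right-hand side is therefore again a block circulant whose first block column is $\texttt{unfold}(\mathcal{C})$, pinning it down completely. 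Once this identity is in hand, I would conjugate by $F_{n_3}\otimes I$, inserting $(F_{n_3}^{-1}\otimes I_{n_2})(F_{n_3}\otimes I_{n_2})/n_3$ between the two right-hand factors, and apply (\ref{operator}) three times (to $\mathcal{A}$, $\mathcal{B}$, and $\mathcal{C}$) to obtain $\bar{C} = \bar{A}\cdot\bar{B}$. The converse follows because the conjugation by $F_{n_3}\otimes I$ (equivalently, the FFT along mode three) is invertible, so $\bar{C}=\bar{A}\bar{B}$ forces the block circulant identity, whose first block column is exactly the definition $\mathcal{C}=\mathcal{A}*\mathcal{B}$.

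The second equivalence is much easier: since $\texttt{bcirc}$ is plainly $\mathbb{R}$-linear, we have $\texttt{bcirc}(\mathcal{A}+\mathcal{B})=\texttt{bcirc}(\mathcal{A})+\texttt{bcirc}(\mathcal{B})$, and conjugating by $F_{n_3}\otimes I_{n_1}$ via (\ref{operator}) converts this slicewise into $\overline{A+B}=\bar{A}+\bar{B}$. Invertibility of the FFT again supplies the converse.

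The only delicate point is the multiplicativity of $\texttt{bcirc}$ on t-products. Rather than grinding through circulant-block bookkeeping, I would reduce to the scalar-tube case: both sides of the claimed identity are $\mathbb{R}$-bilinear in $\mathcal{A},\mathcal{B}$, so it suffices to verify it on elementary tensors whose only non-zero entry lies in a single tube, where it reduces to the standard fact that $n_3\times n_3$ circulant matrices form a commutative algebra under ordinary matrix multiplication. This is the step where attention is needed, but it is entirely routine and is implicit in \cite{Kilmer2013}.
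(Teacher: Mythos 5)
The paper does not actually prove this lemma --- it is quoted verbatim from \cite{Kilmer2013} --- so there is no internal argument to compare against; what you have written is the standard proof from the t-product literature, and it is essentially correct. Your key step, the multiplicativity $\texttt{bcirc}(\mathcal{A}*\mathcal{B})=\texttt{bcirc}(\mathcal{A})\cdot\texttt{bcirc}(\mathcal{B})$, is justified soundly: the product of two block circulants is again block circulant (each commutes with the block cyclic shift), and the first block column of the product is $\texttt{bcirc}(\mathcal{A})\cdot\texttt{unfold}(\mathcal{B})=\texttt{unfold}(\mathcal{C})$, which pins the block circulant down; conjugating by $F_{n_3}\otimes I$ and invoking (\ref{operator}) three times then yields $\bar{C}=\bar{A}\cdot\bar{B}$, and invertibility of the DFT supplies both converses, as does linearity of $\texttt{bcirc}$ for the additive statement. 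One small slip: the factor you insert between the two right-hand factors should be $(F_{n_3}^{-1}\otimes I_{n_2})(F_{n_3}\otimes I_{n_2})$, which is already the identity, so the extra division by $n_3$ is spurious --- it would be needed only if you replaced $F_{n_3}^{-1}$ by $F_{n_3}^{*}$, since $F_{n_3}^{*}F_{n_3}=n_3I_{n_3}$. The closing reduction to single-tube elementary tensors via bilinearity is valid but redundant once the first-block-column argument is in place.
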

\begin{Definition}{(identity tensor) \cite{Kilmer2013}}
	The identity tensor $\mathcal{I}\in \mathbb{R}^{n\times n\times n_3}$ is the tensor with 
	$I^{(1)}$ being the $n\times n$ identity matrix, and other frontal slices being zeros.
\end{Definition}
\begin{Definition}{(tensor transpose) \cite{Kilmer2013}}
	If $\mathcal{A}\in \mathbb{R}^{n_1\times n_2\times n_3}$, then $\mathcal{A}^{\mathrm{T}}$ is the $n_2\times n_1\times n_3$ tensor obtained by transposing each of the frontal slices and then reversing the order of transposed frontal slices 2 through $n_3$.
\end{Definition}
\begin{Definition}($\mathrm{f}$-diagonal tensor)\cite{Kilmer2011}
A tensor is called $\mathrm{f}$-diagonal if each of its frontal slices is a diagonal matrix.
\end{Definition}
\begin{Definition}{(inverse tensor) \cite{Kilmer2013}}
	An $n\times n\times n_3$ tensor $\mathcal{A}$ has an inverse $\mathcal{B}$, provided that $\mathcal{A}*\mathcal{B}=\mathcal{I}_{nnn_3}$ and $ \mathcal{B}*\mathcal{A}=\mathcal{I}_{nnn_3}.$
\end{Definition}
\begin{Definition}\cite{Lu2020,Kilmer2013}
	For $\mathcal{A}\in \mathbb{R}^{n_1\times n_2\times n_3}$,
	The Frobenius norm and the spectral norm of $\mathcal{A}\in \mathbb{R}^{n_1\times n_2\times n_3}$ are defined as
	\begin{equation*}
	{\left\|\mathcal{A}\right\|}_F=\sqrt{\sum_{ijk}{\left|a_{ijk}\right|}^2},\
    {\left\|\mathcal{A}\right\|}_2={\left\|\mathtt{bcirc}(\mathcal{A})\right\|}_2.
	\end{equation*}
\end{Definition}
\hskip 2em We first summarize some basic properties about tensor norms.
\begin{lemma}\label{relation_norm}\cite{Lu2020}
	Let $\mathcal{A}\in \mathbb{R}^{n\times n\times n_3}$. Then
	\begin{equation*}
		\|\mathcal{A}\|_2=\|\bar{A}\|_2,\
		\|\mathcal{A}\|_F=\frac{1}{\sqrt{n_3}}\|\bar{A}\|_F.
	\end{equation*}
\end{lemma}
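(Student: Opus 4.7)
The plan is to exploit the block diagonalization identity (\ref{operator}) and turn it into a genuine unitary similarity by rescaling the DFT, after which both norm identities follow from standard unitary invariance together with a direct count of entries in $\mathtt{bcirc}(\mathcal{A})$.

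First I would rewrite (\ref{operator}) with a normalized transform. Since $F_{n_3}^{\ast}F_{n_3}=n_3 I_{n_3}$, the matrix $U:=\frac{1}{\sqrt{n_3}}F_{n_3}\otimes I_{n}$ is unitary with $U^{-1}=U^{\ast}=\sqrt{n_3}\,F_{n_3}^{-1}\otimes I_{n}$. Substituting into (\ref{operator}) gives
\begin{equation*}
U\cdot\mathtt{bcirc}(\mathcal{A})\cdot U^{\ast}
=\bar{A},
\end{equation*}
so $\bar{A}$ is unitarily similar to $\mathtt{bcirc}(\mathcal{A})$. This is the key identity from which everything else follows.

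For the spectral-norm claim, I would invoke unitary invariance of $\|\cdot\|_{2}$: together with the definition $\|\mathcal{A}\|_{2}=\|\mathtt{bcirc}(\mathcal{A})\|_{2}$, the unitary similarity above yields $\|\mathcal{A}\|_{2}=\|\bar{A}\|_{2}$. For the Frobenius-norm claim, I would first observe from the block-circulant structure that each frontal slice $A^{(i)}$ appears exactly $n_3$ times in $\mathtt{bcirc}(\mathcal{A})$, so
\begin{equation*}
\|\mathtt{bcirc}(\mathcal{A})\|_{F}^{2}=n_3\sum_{i=1}^{n_3}\|A^{(i)}\|_{F}^{2}=n_3\,\|\mathcal{A}\|_{F}^{2}.
\end{equation*}
Unitary invariance of $\|\cdot\|_{F}$ applied to the same similarity gives $\|\bar{A}\|_{F}=\|\mathtt{bcirc}(\mathcal{A})\|_{F}$, and combining the two identities yields $\|\mathcal{A}\|_{F}=\frac{1}{\sqrt{n_3}}\|\bar{A}\|_{F}$.

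The only subtle point is the mismatch between the unnormalized $F_{n_3}$ appearing in (\ref{operator}) and the unitary matrix needed for norm-invariance arguments; once the rescaling $U=\frac{1}{\sqrt{n_3}}F_{n_3}\otimes I_{n}$ is introduced, the rest is routine. I do not anticipate any real obstacle beyond being careful with this normalization factor (and with the square case $n_1=n_2=n$ so that the same $U$ acts on both sides).
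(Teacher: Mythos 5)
Your proof is correct. Note that the paper itself gives no proof of this lemma---it is quoted from the reference [Lu2020]---so there is nothing to compare against; your argument (renormalizing $F_{n_3}$ to the unitary $U=\tfrac{1}{\sqrt{n_3}}F_{n_3}\otimes I_n$, invoking unitary invariance of $\|\cdot\|_2$ and $\|\cdot\|_F$, and counting that each frontal slice occurs $n_3$ times in $\mathtt{bcirc}(\mathcal{A})$ so that $\|\mathtt{bcirc}(\mathcal{A})\|_F^2=n_3\|\mathcal{A}\|_F^2$) is exactly the standard derivation and is complete as written.
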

\begin{Definition}\cite{Zheng2021}
Let $\mathcal{X},\mathcal{Y}\in \mathbb{R}^{n_1\times n_2\times n_3}$. The inner product between $\mathcal{X}$ and $\mathcal{Y}$ is defined as
\begin{equation*}
\left\langle\mathcal{X},\mathcal{Y}\right\rangle=\sum_{i,j,k}a_{ijk}b_{ijk}.
\end{equation*}
\end{Definition}
\begin{Definition}(orthogonal tensor)\cite{Kilmer2013}
An $n\times n\times n_3$ real-valued tensor $\mathcal{Q}$ is orthogonal if $\mathcal{Q}^T*\mathcal{Q}=\mathcal{Q}*\mathcal{Q}^T=\mathcal{I}$.
\end{Definition}
\begin{Definition}(symmetric t-positive (semi)definite tensor)\cite{Zheng2021}
Let $\mathcal{A}\in \mathbb{R}^{n\times n\times n_3}$. We say $\mathcal{A}$ is a symmetric t-positive (semi)definite tensor, if and only if $\mathcal{A}$ is a symmetric tensor and
\begin{equation*}
	\left\langle\mathcal{X},\mathcal{A}*\mathcal{X}\right\rangle>(\ge)0
\end{equation*}
holds for any $\mathcal{X}\in \mathbb{R}^{n\times 1\times n_3} \backslash
\left\{\mathbf{0}\right\}.$
\end{Definition}

\hskip 2em Note that the definition of symmetric t-positive (semi)definiteness given above is consistent with that in \cite{Kilmer2013}.
\begin{lemma}\label{relation_definite}\cite{Zheng2021}
	Suppose that $\mathcal{A}\in \mathbb{R}^{n\times n\times n_3}$ can be block diagonalized as (\ref{operator}).
	Then $\mathcal{A}$ is a symmetric t-positive (semi)definite tensor if and only if that all the matrices $\bar{A}^{(i)}\ (i=1,\ldots,n_3)$ are Hermitian positive (semi)definite.
\end{lemma}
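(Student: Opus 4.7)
The plan is to translate the t-positive (semi)definiteness of $\mathcal{A}$ into a spectral condition on the real symmetric matrix $\mathtt{bcirc}(\mathcal{A})$, and then transfer this condition block by block through the block diagonalization~(\ref{operator}).

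First, set $\mathbf{x}=\mathtt{unfold}(\mathcal{X})\in\mathbb{R}^{nn_3}$. Unrolling the definitions of the t-product and of $\langle\cdot,\cdot\rangle$ gives the direct identity
\begin{equation*}
\langle\mathcal{X},\mathcal{A}*\mathcal{X}\rangle=\mathbf{x}^{T}\,\mathtt{bcirc}(\mathcal{A})\,\mathbf{x},
\end{equation*}
since $\mathtt{unfold}(\mathcal{A}*\mathcal{X})=\mathtt{bcirc}(\mathcal{A})\,\mathtt{unfold}(\mathcal{X})$ and the inner product of two $n\times 1\times n_3$ tensors coincides with the ordinary dot product of their unfoldings. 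The mapping $\mathcal{X}\mapsto\mathbf{x}$ is a bijection from $\mathbb{R}^{n\times 1\times n_3}$ onto $\mathbb{R}^{nn_3}$. Moreover, a short calculation with the tensor-transpose convention shows $\mathtt{bcirc}(\mathcal{A}^{T})=\mathtt{bcirc}(\mathcal{A})^{T}$, so $\mathcal{A}$ is symmetric if and only if $\mathtt{bcirc}(\mathcal{A})$ is real symmetric. Consequently, symmetric t-positive (semi)definiteness of $\mathcal{A}$ is equivalent to real symmetric positive (semi)definiteness of $\mathtt{bcirc}(\mathcal{A})$.

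Next, I would recast (\ref{operator}) as a unitary similarity. Because $U:=\tfrac{1}{\sqrt{n_3}}F_{n_3}$ is unitary, $U\otimes I_n$ is unitary as well, and (\ref{operator}) can be rewritten as
\begin{equation*}
(U\otimes I_n)\,\mathtt{bcirc}(\mathcal{A})\,(U\otimes I_n)^{*}=\bar{A}.
\end{equation*}
Hence $\mathtt{bcirc}(\mathcal{A})$ and $\bar{A}=\mathtt{diag}(\bar{A}^{(1)},\ldots,\bar{A}^{(n_3)})$ are unitarily similar, so they share the same Hermitian structure and spectrum. Because $\bar{A}$ is block diagonal, its spectrum is the union of the spectra of its blocks, yielding: $\bar{A}$ is Hermitian positive (semi)definite if and only if every $\bar{A}^{(i)}$ is.

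The one point requiring care is that t-positive (semi)definiteness tests only against real $\mathcal{X}$, while positive (semi)definiteness of each $\bar{A}^{(i)}$ quantifies over complex vectors. This is handled by the standard fact that a real symmetric matrix $M$ satisfies $\mathbf{x}^{T}M\mathbf{x}\ge 0$ for all real $\mathbf{x}$ if and only if all of its eigenvalues are nonnegative, if and only if $\mathbf{z}^{*}M\mathbf{z}\ge 0$ for all complex $\mathbf{z}$. Applying this to $M=\mathtt{bcirc}(\mathcal{A})$ and chaining with the unitary similarity from the previous step completes both directions; the strictly definite case is identical with strict inequalities. The main obstacle is precisely this real-to-complex passage, but it reduces to a classical spectral fact; the remainder is bookkeeping among $\mathcal{A}$, $\mathtt{bcirc}(\mathcal{A})$, $\bar{A}$, and its diagonal blocks.
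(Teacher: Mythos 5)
The paper does not prove this lemma at all: it is imported verbatim from \cite{Zheng2021}, so there is no in-paper argument to compare against. Your proof is correct and is the natural one. The three pillars all check out: (i) $\langle\mathcal{X},\mathcal{A}*\mathcal{X}\rangle=\mathbf{x}^{T}\mathtt{bcirc}(\mathcal{A})\mathbf{x}$ with $\mathbf{x}=\mathtt{unfold}(\mathcal{X})$, together with $\mathtt{bcirc}(\mathcal{A}^{T})=\mathtt{bcirc}(\mathcal{A})^{T}$, reduces symmetric t-positive (semi)definiteness to ordinary (semi)definiteness of the real symmetric matrix $\mathtt{bcirc}(\mathcal{A})$; (ii) since $F_{n}^{*}F_{n}=nI_{n}$, the similarity in (\ref{operator}) is indeed unitary after rescaling by $1/\sqrt{n_3}$, so $\mathtt{bcirc}(\mathcal{A})$ and $\bar{A}$ share Hermitian structure and spectrum, and the block-diagonal form localizes the condition to the blocks $\bar{A}^{(i)}$; (iii) you correctly flag and dispose of the only delicate point, the passage from the real quadratic form in the t-product definition to the complex quadratic forms implicit in ``Hermitian positive (semi)definite,'' via the spectral characterization of a real symmetric matrix. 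Both directions of the equivalence, and the definite as well as the semidefinite case, follow. No gaps.
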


\hskip 2em In view of the tensor t-function \cite{Lund2021}, for any $\mathcal{A}\in \mathbb{R}^{n\times n\times n_3}$, we know that
$(\mathcal{A}^{\mathrm{T}}*\mathcal{A})^{1/2}$ is a symmetric t-positive semidefinite tensor. Like the matrix case, we denote it as $|\mathcal{A}|$, i.e. $|\mathcal{A}|\equiv (\mathcal{A}^{\mathrm{T}}*\mathcal{A})^{1/2}$.
The following result is a straightforward corollary of Lemma \ref{relation_definite}.
\begin{Corollary}\label{relation_dayu}	Let $\mathcal{A},\mathcal{B} \in \mathbb{R}^{n\times n\times n_3}$, then $\mathcal{A}\ge\mathcal{B}$ if and only if $\bar{A}\ge \bar{B}$.
\end{Corollary}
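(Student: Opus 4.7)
The plan is to reduce the equivalence to Lemmas \ref{relation} and \ref{relation_definite} by passing through the difference tensor. I would set $\mathcal{C} := \mathcal{B} - \mathcal{A}$. By the convention introduced at the start of Section 2, the statement $\mathcal{A} \le \mathcal{B}$ is literally the assertion that $\mathcal{C}$ is a symmetric t-positive semidefinite tensor, while $\bar{A} \le \bar{B}$ is literally the assertion that $\bar{B} - \bar{A}$ is Hermitian positive semidefinite. The whole corollary is therefore the equivalence ``$\mathcal{C}$ is sym.\ t-PSD $\Longleftrightarrow$ $\bar{B}-\bar{A}$ is PSD''.

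Next I would apply Lemma \ref{relation} to the sum $\mathcal{B} = \mathcal{A} + \mathcal{C}$ (equivalently to the difference) so as to identify $\bar{C} = \bar{B} - \bar{A}$. This is the key bridge: it rewrites the right-hand condition as the positive semidefiniteness of $\bar{C}$ itself. Because $\bar{C}$ is block diagonal with diagonal blocks $\bar{C}^{(i)} = \bar{B}^{(i)} - \bar{A}^{(i)}$ for $i=1,\dots,n_3$, Hermitian positive semidefiniteness of $\bar{C}$ is equivalent to Hermitian positive semidefiniteness of every block $\bar{C}^{(i)}$, a standard and immediate fact about block-diagonal matrices.

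Finally, I would invoke Lemma \ref{relation_definite} applied to $\mathcal{C}$: the block-wise condition that each $\bar{C}^{(i)}$ is Hermitian positive semidefinite is exactly the criterion for $\mathcal{C}$ to be symmetric t-positive semidefinite. Chaining these three equivalences closes both directions of the ``if and only if'' simultaneously.

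There is no substantive obstacle here; the corollary really is a two-line consequence of the earlier lemmas. The only point that warrants a line of verification is that the inequality symbols carry implicit symmetry/Hermiticity assumptions on $\mathcal{A},\mathcal{B}$ and on $\bar{A},\bar{B}$, so that the symmetry of $\mathcal{C}$ and the Hermiticity of the blocks $\bar{C}^{(i)}$ needed by Lemma \ref{relation_definite} come for free. Once this observation is recorded, the proof amounts to the sequence of equivalences described above.
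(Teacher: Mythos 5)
Your proof is correct and follows exactly the route the paper intends: the paper gives no explicit argument, merely declaring the corollary a straightforward consequence of Lemma \ref{relation_definite}, and your chain of equivalences through the difference tensor $\mathcal{C}=\mathcal{B}-\mathcal{A}$, Lemma \ref{relation}, and the block-diagonal structure of $\bar{C}$ is precisely the omitted verification. Nothing further is needed.
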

\begin{Definition}\cite{Liu_Jin2021}
Let $\mathcal{A}\in \mathbb{C}^{n\times n\times n_3}$. Suppose that $\mathcal{X}\in \mathbb{C}^{n\times 1\times n_3}$ and $\mathcal{X}\neq \mathbf{0}$. If
\begin{equation*}
	\mathcal{A}*\mathcal{X}=\lambda\mathcal{X},\ \lambda \in \mathbb{C},
\end{equation*}
then $\lambda$ is called a t-eigenvalue of $\mathcal{A}$ and $\mathcal{X}$ is a t-eigenvector of $\mathcal{A}$ associated to $\lambda$.
\end{Definition}
The authors in \cite{Liu_Jin2021} pointed out 
all the t-eigenvalues of $\mathcal{A}$ are actually the eigenvalues of the matrix $\mathtt{bcirc}(\mathcal{A})$, and vice versa.
Hence the definition of the t-eigenvalues is equivalent to that given in \cite{Miao_Qi2021}.
\begin{Definition}
For $\mathcal{A}\in \mathbb{R}^{n\times n\times n_3}$, we say $\mathcal{A}$ a normal tensor if and only if
\begin{equation*}
\mathcal{A}^{\mathrm{T}}*\mathcal{A}=\mathcal{A}*\mathcal{A}^{\mathrm{T}}.
\end{equation*}
\end{Definition}
\section{Tensor power inequalities}
\hskip 2em We consider the generalization of several inequalities involving matrix power to the tensor scenarios.
\subsection{Tensor L{\" o}wner-Heinz inequality}
\hskip 2em The celebrated L{\" o}wner-Heinz inequality can be generalized to tensors as follows.
\begin{theorem}
Let $\mathcal{A},\mathcal{B} \in \mathbb{R}^{n\times n\times n_3}$, $\mathcal{A}\ge\mathcal{B}\ge 0$, $0\le r\le1$, then
\begin{equation}\label{LH}
	\mathcal{A}^{r}\ge\mathcal{B}^{r}.
\end{equation}
\end{theorem}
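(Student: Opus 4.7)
The plan is to reduce the tensor Löwner-Heinz inequality to its classical matrix counterpart by passing through the block-diagonal representation $\bar{A}$ produced by the DFT. The key observation is that the tensor power $\mathcal{A}^r$, as a tensor t-function in the sense of Lund, is defined precisely so that $\overline{\mathcal{A}^r}$ is the block diagonal matrix whose $i$th block is $(\bar{A}^{(i)})^r$, and analogously for $\mathcal{B}^r$. Thus, under the DFT, the statement $\mathcal{A}^r \ge \mathcal{B}^r$ is equivalent (by Corollary \ref{relation_dayu}) to the matrix inequality $\bar{A}^r \ge \bar{B}^r$, where both sides are Hermitian block-diagonal matrices.

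First, I would translate the hypotheses. From $\mathcal{A} \ge \mathcal{B} \ge 0$ together with Lemma \ref{relation_definite} and Corollary \ref{relation_dayu}, I obtain that each frontal block satisfies
\begin{equation*}
\bar{A}^{(i)} \ge \bar{B}^{(i)} \ge 0, \qquad i = 1, 2, \ldots, n_3,
\end{equation*}
in the sense of Hermitian positive semidefinite ordering. In particular, $\bar{A}^{(i)}$ and $\bar{B}^{(i)}$ are Hermitian positive semidefinite, so their fractional powers $(\bar{A}^{(i)})^r$ and $(\bar{B}^{(i)})^r$ are well defined through the usual functional calculus for $0 \le r \le 1$.

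Next, I would invoke the classical matrix Löwner-Heinz inequality on each pair of blocks to conclude
\begin{equation*}
(\bar{A}^{(i)})^r \ge (\bar{B}^{(i)})^r, \qquad i = 1, 2, \ldots, n_3.
\end{equation*}
Assembling these block inequalities gives $\bar{A}^r \ge \bar{B}^r$ as block diagonal Hermitian matrices. A final application of Corollary \ref{relation_dayu} (together with the fact that $\overline{\mathcal{A}^r} = \bar{A}^r$ by construction of the tensor t-function) lifts this back to the tensor inequality $\mathcal{A}^r \ge \mathcal{B}^r$, as required.

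The argument is mostly bookkeeping, and no single step is genuinely hard, but the one subtlety worth being explicit about is the identification $\overline{\mathcal{A}^r} = \mathrm{diag}\bigl((\bar{A}^{(1)})^r, \ldots, (\bar{A}^{(n_3)})^r\bigr)$. This depends on the definition of the tensor t-function via block-diagonalization, and it is the hinge that turns the tensor problem into $n_3$ independent matrix problems. Once that identification is cited or verified, the classical matrix Löwner-Heinz inequality does all of the analytic work, and Corollary \ref{relation_dayu} supplies the two-way bridge between tensor ordering and block-diagonal matrix ordering needed at both ends of the proof.
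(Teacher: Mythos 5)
Your proposal is correct and follows essentially the same route as the paper's own proof: reduce to the blocks $\bar{A}^{(i)}\ge\bar{B}^{(i)}\ge 0$ via Corollary \ref{relation_dayu}, apply the classical matrix L{\"o}wner-Heinz inequality blockwise, and lift back with Lemma \ref{relation_definite}. Your explicit remark that $\overline{\mathcal{A}^r}=\mathtt{diag}\bigl((\bar{A}^{(1)})^r,\ldots,(\bar{A}^{(n_3)})^r\bigr)$ by the definition of the tensor t-function is a detail the paper leaves implicit, but it does not change the argument.
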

\begin{proof}
From Corollary \ref{relation_dayu} and the assumption, we know that $\bar{A}^{(i)}\ge \bar{B}^{(i)}\ge0$. Due to \cite[Theorem 1.1]{Zhan2002}, $(\bar{A}^{(i)})^r\ge(\bar{B}^{(i)})^r$, which means that $(\bar{A}^{(i)})^r-(\bar{B}^{(i)})^r$ is positive semidefinite.
The proof is complete as Lemma \ref{relation_definite}.
\end{proof}
An illustrative example below shows that this result does not hold for $r>1$.
\begin{example}
Assume $\mathcal{A}, \mathcal{B} \in \mathbb{R}^{2\times 2\times 2}$, with
\begin{equation*}
A^{(1)}=\begin{bmatrix}
		2&1\\1&1
	\end{bmatrix},
B^{(1)}=\begin{bmatrix}
	1&0\\0&0
\end{bmatrix},
\end{equation*}
and the other frontal slices of $\mathcal{A}$ and $\mathcal{B}$ are zeros. Notice that $\mathcal{A}\ge\mathcal{B}\ge 0$, however, the first frontal slice of $(\mathcal{A}^2-\mathcal{B}^2)$ is $\begin{bmatrix}
	4&3\\3&2
\end{bmatrix}$, and the second frontal slice is a zero matrix. Therefore, $\mathcal{A}^2\ge \mathcal{B}^2$ dose not hold.
\end{example}

\hskip 2em Likewise, for the tensor power, we also have the following inequalities.
\begin{theorem}
Let $\mathcal{Q}$ be orthogonal and $\mathcal{X}\ge0$. Then
\begin{equation}\label{power_r_1}
	\mathcal{Q}*\mathcal{X}^r*\mathcal{Q}\le(\mathcal{Q}*\mathcal{X}*\mathcal{Q})^r,\mathrm{if}\ 0<1\le 1,
\end{equation}
\begin{equation}\label{power_r_2}
	\mathcal{Q}*\mathcal{X}^r*\mathcal{Q}\ge(\mathcal{Q}*\mathcal{X}*\mathcal{Q})^r,\mathrm{if}\  1\le r\le 2.
\end{equation}
\end{theorem}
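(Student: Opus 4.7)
The plan is to reduce this to the matrix analogue (\cite[Theorem 1.20]{Zhan2002}: for a contraction $T$ and $X\ge 0$, $T^{*}X^{r}T\le(T^{*}XT)^{r}$ when $0<r\le 1$, with the reverse inequality when $1\le r\le 2$) via the block diagonalization (\ref{operator}), and then to lift the resulting matrix inequality back to tensors through Corollary \ref{relation_dayu}. This mirrors the proof of the tensor L\"owner--Heinz inequality in the preceding theorem.

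First I would transform both sides into the Fourier domain by Lemma \ref{relation}. On the $i$th diagonal block, $\mathcal{Q}*\mathcal{X}^{r}*\mathcal{Q}$ becomes $\bar{Q}^{(i)}(\bar{X}^{(i)})^{r}\bar{Q}^{(i)}$ while $(\mathcal{Q}*\mathcal{X}*\mathcal{Q})^{r}$ becomes $(\bar{Q}^{(i)}\bar{X}^{(i)}\bar{Q}^{(i)})^{r}$. Here I use that the tensor $r$th power of a t-positive semidefinite tensor is defined, via the DFT, by the ordinary matrix $r$th power on each Fourier block, together with Lemma \ref{relation_definite}. From $\mathcal{Q}^{\mathrm T}*\mathcal{Q}=\mathcal{I}$, combined with the standard identification $\overline{\mathcal{Q}^{\mathrm T}}^{(i)}=(\bar{Q}^{(i)})^{*}$, each $\bar{Q}^{(i)}$ is unitary, hence a contraction, and each $\bar{X}^{(i)}$ is Hermitian positive semidefinite.

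Second I would apply the matrix contraction L\"owner--Heinz inequality blockwise to obtain the desired matrix inequality on each of the $n_{3}$ blocks, with the direction determined by whether $0<r\le 1$ or $1\le r\le 2$. Stacking these inequalities produces an inequality between the two block-diagonal matrices $\bar{A}$ and $\bar{B}$, and Corollary \ref{relation_dayu} then returns us to the tensor ordering to conclude (\ref{power_r_1}) and (\ref{power_r_2}). The step I expect to require the most care is verifying that the functional calculus used to define the tensor power commutes with the block diagonalization---that is, that $\overline{(\mathcal{Q}*\mathcal{X}*\mathcal{Q})^{r}}$ is the block-diagonal matrix with blocks $(\bar{Q}^{(i)}\bar{X}^{(i)}\bar{Q}^{(i)})^{r}$---so that the slicewise matrix inequality can be invoked as stated. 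Once that point is clean, the remainder of the argument is mechanical.
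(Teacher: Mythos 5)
Your proposal is correct and follows essentially the same route as the paper: the paper also passes to the Fourier domain, observes that $\bar{Q}$ is unitary, invokes the matrix contraction version of the L\"owner--Heinz inequality from Zhan's book, and returns to tensors via Corollary \ref{relation_dayu}. The only difference is cosmetic --- you argue block by block while the paper works with the full block-diagonal matrices at once --- and your extra care about the functional calculus commuting with the block diagonalization is a point the paper silently assumes.
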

\begin{proof}
From the definition of orthogonality and Lemma \ref{relation}, we know that $\bar{Q}$ is a unitary matrix.
It follows from \cite[Lemma 3.1]{Zhan2002} that
\begin{equation*}
	\bar{Q}\bar{X}^r\bar{Q}\le(\bar{Q}\bar{X}\bar{Q})^r,\ 0< r\le1.
\end{equation*}
Applying Corollary \ref{relation_dayu} yields (\ref{power_r_1}). (\ref{power_r_2}) can be proved similarly.
\end{proof}
\begin{theorem}\label{power_ineq}
If $\mathcal{A}\ge\mathcal{B}\ge0$, then
\begin{equation}\label{power}
	(\mathcal{B}^r*\mathcal{A}^p*\mathcal{B}^r)^{1/q}\ge\mathcal{B}^{(p+2r)/q}
\end{equation}
and
\begin{equation}\label{power_cf2}
	\mathcal{A}^{(p+2r)/q}\ge(\mathcal{A}^r*\mathcal{B}^p*\mathcal{A}^r)^{1/q}
\end{equation}
for $r\ge0,p\ge0,q\ge1$ with $(1+2r)q\ge p+2r$.
\end{theorem}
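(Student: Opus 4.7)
The plan is to reduce the tensor inequalities to their matrix counterparts via the DFT block-diagonalization that has already been exploited throughout the paper. By Lemma \ref{relation}, the t-product on the tensor side corresponds to ordinary matrix multiplication of the block-diagonal companion $\bar A=\mathtt{diag}(\bar A^{(1)},\ldots,\bar A^{(n_3)})$. Consequently, for any integer or real power defined via the tensor t-function, we have $\overline{\mathcal{A}^s}=\bar A^{\,s}=\mathtt{diag}\bigl((\bar A^{(1)})^s,\ldots,(\bar A^{(n_3)})^s\bigr)$, provided the powers make sense (which is guaranteed here by $\mathcal{A},\mathcal{B}\ge 0$ and Lemma \ref{relation_definite}). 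This identification is the only structural fact we really need.

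First I would apply Corollary \ref{relation_dayu} to the hypothesis $\mathcal{A}\ge\mathcal{B}\ge 0$ to conclude $\bar A^{(i)}\ge\bar B^{(i)}\ge 0$ for every $i=1,\ldots,n_3$. Next I would invoke the classical Furuta inequality for Hermitian positive semidefinite matrices (see, e.g., \cite[Theorem 1.3]{Zhan2002}): for each $i$, under the parameter constraints $r\ge 0$, $p\ge 0$, $q\ge 1$, $(1+2r)q\ge p+2r$, one has
\begin{equation*}
\bigl((\bar B^{(i)})^{r}(\bar A^{(i)})^{p}(\bar B^{(i)})^{r}\bigr)^{1/q}\ \ge\ (\bar B^{(i)})^{(p+2r)/q}
\end{equation*}
and the dual inequality
\begin{equation*}
(\bar A^{(i)})^{(p+2r)/q}\ \ge\ \bigl((\bar A^{(i)})^{r}(\bar B^{(i)})^{p}(\bar A^{(i)})^{r}\bigr)^{1/q}.
\end{equation*}

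Finally I would translate back: by the blockwise correspondence noted above, the left-hand side of (\ref{power}) has Fourier image $\mathtt{diag}\bigl(((\bar B^{(i)})^{r}(\bar A^{(i)})^{p}(\bar B^{(i)})^{r})^{1/q}\bigr)_{i}$ and the right-hand side has Fourier image $\mathtt{diag}\bigl((\bar B^{(i)})^{(p+2r)/q}\bigr)_{i}$, so their difference is block-diagonal with positive semidefinite blocks. Applying Lemma \ref{relation_definite} (equivalently, the ``if'' direction of Corollary \ref{relation_dayu}) yields (\ref{power}); the argument for (\ref{power_cf2}) is verbatim the same.

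The only genuine obstacle is the compatibility statement $\overline{\mathcal{A}^{s}}=\bar A^{\,s}$ for the fractional powers appearing in (\ref{power}) and (\ref{power_cf2}), since the exponents $1/q$ and $(p+2r)/q$ are not integers. This, however, is immediate from the t-function definition of Lund \cite{Lund2021}: the t-function is built by applying the scalar function $t\mapsto t^{s}$ to the eigenvalues of each Hermitian block $\bar A^{(i)}\ge 0$, and thus coincides blockwise with the matrix functional calculus used in the classical Furuta inequality. Once this identification is made, the proof reduces to a one-line citation of the matrix result followed by Corollary \ref{relation_dayu}.
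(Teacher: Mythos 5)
Your proposal is correct and follows essentially the same route as the paper: pass to the Fourier-domain block-diagonal matrices via Corollary \ref{relation_dayu}, invoke the classical matrix Furuta inequality from Zhan's book on each Hermitian positive semidefinite block, and pull the resulting positive semidefiniteness back with Lemma \ref{relation_definite}. Your extra remark justifying $\overline{\mathcal{A}^{s}}=\bar A^{\,s}$ for fractional exponents via the t-function is a detail the paper leaves implicit, but it does not change the argument.
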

\begin{proof}
The inequality (\ref{power}) is a direct consequence of Corollary \ref{relation_dayu} and
\begin{equation*}
(\bar{B}^r\cdot\bar{A}^p\cdot\bar{B}^r)\ge\bar{B}^{(p+2r)/q},
\end{equation*}
which follows from \cite[Theorem 1.16]{Zhan2002}. We can also obtain (\ref{power_cf2}) in the same way.
\end{proof}

\hskip 2em Notice that the case $p=q\ge1$ of Theorem \ref{power_ineq} is the following.
\begin{Corollary}
	Suppose $\mathcal{A}\ge \mathcal{B}\ge 0.$ We obtain that
	\begin{equation*}
		(\mathcal{B}^r*\mathcal{A}^p*\mathcal{B}^r)^{1/p}\ge\mathcal{B}^{(p+2r)/p}\ and\
	\mathcal{A}^{(p+2r)/p}\ge(\mathcal{A}^r*\mathcal{B}^p*\mathcal{A}^r)^{1/p}
\end{equation*}
hold for all $r\ge0$ and $p\ge1$. Especially, if $r=1,p=2$, then
	\begin{equation*}
		(\mathcal{B}*\mathcal{A}^2*\mathcal{B})^{1/2}\ge \mathcal{B}^2 \ and \
		\mathcal{A}^2\ge (\mathcal{A}*\mathcal{B}^2*\mathcal{A})^{1/2}.
	\end{equation*}
\end{Corollary}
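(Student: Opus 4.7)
The plan is to derive this corollary as a direct specialization of Theorem \ref{power_ineq}, so essentially no new machinery is needed — the work is to check that the hypothesis condition of the theorem is automatically satisfied when one sets $p=q$, and then to plug in the specific values $r=1$, $p=2$ for the last assertion.

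First I would recall the hypothesis of Theorem \ref{power_ineq}: the conclusion holds provided $r\ge 0$, $p\ge 0$, $q\ge 1$, and $(1+2r)q\ge p+2r$. Setting $q=p$, I need to verify
\begin{equation*}
(1+2r)p \;\ge\; p+2r,
\end{equation*}
which rearranges to $2r(p-1)\ge 0$. Since $r\ge 0$ and $p\ge 1$, this is immediate. Hence the pair $(p,q)=(p,p)$ lies in the admissible region of Theorem \ref{power_ineq}, and substituting $q=p$ into the two inequalities \eqref{power} and \eqref{power_cf2} yields
\begin{equation*}
(\mathcal{B}^r*\mathcal{A}^p*\mathcal{B}^r)^{1/p}\ge\mathcal{B}^{(p+2r)/p} \quad \text{and}\quad \mathcal{A}^{(p+2r)/p}\ge(\mathcal{A}^r*\mathcal{B}^p*\mathcal{A}^r)^{1/p},
\end{equation*}
which is exactly the first assertion of the corollary.

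For the special case, I simply compute the exponent: with $r=1$ and $p=2$, we have $(p+2r)/p = 4/2 = 2$, so $\mathcal{B}^{(p+2r)/p}=\mathcal{B}^2$ and $\mathcal{A}^{(p+2r)/p}=\mathcal{A}^2$. Substituting these into the general inequalities gives
\begin{equation*}
(\mathcal{B}*\mathcal{A}^2*\mathcal{B})^{1/2}\ge\mathcal{B}^2 \quad\text{and}\quad \mathcal{A}^2\ge(\mathcal{A}*\mathcal{B}^2*\mathcal{A})^{1/2},
\end{equation*}
completing the proof.

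There is really no hard step here — the only small thing to be careful about is the algebraic verification $2r(p-1)\ge 0$, which ensures we do not fall outside the admissible range of Theorem \ref{power_ineq}. Everything else is direct substitution, so the corollary can be stated without any reference to the block-diagonalization machinery (Corollary \ref{relation_dayu}, Lemma \ref{relation_definite}), as that has already been absorbed into the proof of the parent theorem.
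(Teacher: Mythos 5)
Your proposal is correct and matches the paper's approach exactly: the paper presents this corollary as the case $p=q\ge 1$ of Theorem \ref{power_ineq}, and your explicit check that $(1+2r)p\ge p+2r$ reduces to $2r(p-1)\ge 0$ simply spells out the admissibility verification that the paper leaves implicit.
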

\subsection{Tensor Young inequality}
\hskip 2em The gist of the most important case of the Young inequality is that
 if $1/p + 1/q = 1$, with $p,q>1$, then $|ab|\le{|a|}^p/p+{|b|}^q/q$ for $a,b \in \mathbb{C}$.
 Ando  in \cite{Ando1995} pointed out that if $A,B$ is a commuting pair and $AB\ge 0$, then it is clear that
\begin{equation*}
	AB\le\frac{A^p}{p}+\frac{B^q}{q}.
\end{equation*}
Now we extend this classical inequality to tensors.
\begin{theorem}
If $\mathcal{A},\mathcal{B}\ge0$ is a commuting pair, i.e., $\mathcal{A}*\mathcal{B}=\mathcal{B}*\mathcal{A}$ and $\mathcal{A*B}\ge 0$, then for $p,q>1$,
\begin{equation*}
	\mathcal{A}*\mathcal{B}\le \frac{1}{p}\mathcal{A}^p+\frac{1}{q}\mathcal{B}^q.
\end{equation*}
\begin{proof}
Begin with the fact that $\mathrm{bcirc}(\mathcal{A}+\mathcal{B})=\mathrm{bcirc}(\mathcal{A})+\mathrm{bcirc}(\mathcal{B})$. Then we have
\begin{equation*}
\begin{aligned}
&(F_{n_3}\otimes I_{n_1})\cdot\mathrm{bcirc}(\mathcal{A}*\mathcal{B})\cdot(F_{n_3}^{-1}\otimes I_{n_1})=\bar{A}\bar{B}\le\frac{1}{p}\bar{A}^p+\frac{1}{q}\bar{B}^q\\=&
(F_{n_3}\otimes I_{n_1})\cdot\mathrm{bcirc}(\frac{1}{p}\mathcal{A}^p+\frac{1}{q}\mathcal{B}^q)\cdot(F_{n_3}^{-1}\otimes I_{n_1}).
\end{aligned}
\end{equation*}
Applying Corollary \ref{relation_dayu} again gives the conclusion.
\end{proof}
\end{theorem}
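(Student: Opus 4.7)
The plan is to reduce the tensor inequality to a family of matrix inequalities in the Fourier-transformed domain, exactly as in the previous theorems of this section. By Lemma \ref{relation}, the t-product $\mathcal{A}*\mathcal{B}$ corresponds to the blockwise product $\bar{A}\bar{B}$, and the powers $\mathcal{A}^p$, $\mathcal{B}^q$ defined via the tensor t-function correspond blockwise to the matrix powers $(\bar{A}^{(i)})^p$, $(\bar{B}^{(i)})^q$. Consequently, the commuting relation $\mathcal{A}*\mathcal{B}=\mathcal{B}*\mathcal{A}$ translates to $\bar{A}^{(i)}\bar{B}^{(i)}=\bar{B}^{(i)}\bar{A}^{(i)}$ for each $i$, and the hypothesis $\mathcal{A}*\mathcal{B}\ge 0$ together with Corollary \ref{relation_dayu} forces each product $\bar{A}^{(i)}\bar{B}^{(i)}$ to be Hermitian positive semidefinite.

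Next, I would invoke the classical Young inequality for commuting positive semidefinite matrices due to Ando: for Hermitian PSD matrices $X$ and $Y$ with $XY=YX$ and $XY\ge 0$, one has $XY\le X^p/p+Y^q/q$ whenever $1/p+1/q=1$ with $p,q>1$. Applied to the pair $(\bar{A}^{(i)}, \bar{B}^{(i)})$, this yields
\begin{equation*}
\bar{A}^{(i)}\bar{B}^{(i)} \le \frac{1}{p}(\bar{A}^{(i)})^p + \frac{1}{q}(\bar{B}^{(i)})^q
\end{equation*}
for every $i=1,\ldots,n_3$. Assembling these inequalities along the block diagonal gives $\bar{A}\bar{B}\le \tfrac{1}{p}\bar{A}^p+\tfrac{1}{q}\bar{B}^q$ in the Loewner order on block-diagonal Hermitian matrices.

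The final step is to lift this inequality back to the tensor setting. Using Lemma \ref{relation} to identify $\bar{A}\bar{B}$ with $\overline{\mathcal{A}*\mathcal{B}}$ and the blockwise powers with $\overline{\mathcal{A}^p}$ and $\overline{\mathcal{B}^q}$, one more application of Corollary \ref{relation_dayu} yields $\mathcal{A}*\mathcal{B}\le \tfrac{1}{p}\mathcal{A}^p+\tfrac{1}{q}\mathcal{B}^q$, which is the desired tensor Young inequality.

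The main obstacle I anticipate is conceptual rather than computational: one needs to confirm that the commutativity and positivity hypotheses both transfer cleanly to each Fourier block, and in particular that $\mathcal{A}*\mathcal{B}\ge 0$ really produces the Hermitian PSD product $\bar{A}^{(i)}\bar{B}^{(i)}$ required by the matrix version of Young's inequality. This is resolved by the observation that a block-diagonal Hermitian matrix is positive semidefinite if and only if each of its diagonal blocks is, so no new technical tool is needed beyond the Fourier-domain dictionary already established in Section 2.
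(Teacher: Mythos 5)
Your proposal is correct and follows essentially the same route as the paper: diagonalize via the DFT, apply Ando's Young inequality for commuting positive semidefinite matrices to the Fourier blocks, and transfer the Loewner inequality back with Corollary \ref{relation_dayu}. The only cosmetic difference is that you argue block by block where the paper writes the single inequality $\bar{A}\bar{B}\le\frac{1}{p}\bar{A}^p+\frac{1}{q}\bar{B}^q$ for the full block-diagonal matrices, which is the same statement.
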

\hskip 2em A generalized matrix Young inequality was given in \cite{Ando1995}.
\begin{lemma}\label{Young_lemma}\cite{Ando1995}
Let $p,q>0$ be mutually conjugate exponents, that is, $1/p+1/q=1.$ Then for any pair $A,B$ of $n\times n$ complex matrices, there is a unitary matrix $U$
depending on $A,B$ such that
\begin{equation*}
U^H|AB^H|U\le\frac{|A|^p}{p}+\frac{|B|^q}{q}.
\end{equation*}
\end{lemma}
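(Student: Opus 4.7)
This is Ando's 1995 matrix Young inequality. My plan to prove it from first principles proceeds in three steps, standard for unitary-similarity-up-to results.

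\emph{Step 1: Reduction to the positive semidefinite case.} Writing polar decompositions $A=U_A|A|$ and $B=U_B|B|$, I would compute
$(AB^H)^H(AB^H)=BA^HAB^H=U_B\,|B|\,|A|^2\,|B|\,U_B^H$,
whence
$|AB^H|=U_B\bigl(|B|\,|A|^2\,|B|\bigr)^{1/2}U_B^H=U_B\,\bigl|\,|A|\cdot|B|\,\bigr|\,U_B^H$.
Absorbing $U_B$ into the sought unitary, it suffices to prove: for any positive semidefinite matrices $\alpha,\beta$, there exists a unitary $V$ with $V^H|\alpha\beta|V\le \alpha^p/p+\beta^q/q$.

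\emph{Step 2: Pointwise eigenvalue comparison.} With $\alpha,\beta\ge 0$, the goal is the pointwise bound $\lambda_k(|\alpha\beta|)\le\lambda_k(\alpha^p/p+\beta^q/q)$ for each $k$, with eigenvalues in decreasing order. Horn's weak log-majorization $\sigma(\alpha\beta)\prec_{w\log}(\sigma_k(\alpha)\sigma_k(\beta))_k$, combined with the scalar Young inequality applied termwise $\sigma_k(\alpha)\sigma_k(\beta)\le \sigma_k(\alpha)^p/p+\sigma_k(\beta)^q/q$, yields the weak majorization $\sigma(\alpha\beta)\prec_w(\sigma_k(\alpha)^p/p+\sigma_k(\beta)^q/q)_k$, which controls partial sums of eigenvalues. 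To upgrade this partial-sum control to the pointwise comparison against $\lambda_k(\alpha^p/p+\beta^q/q)$, I would supply a further matrix-analytic ingredient---a direct $\min$-$\max$ argument tailored to the spectral structure of the sum, or an application of the Furuta inequality/operator-monotone techniques in the spirit of Ando's original treatment.

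\emph{Step 3: Diagonalization lift.} Once the pointwise bound of Step 2 is in hand, the unitary is produced by the standard construction: spectrally decompose $X=|\alpha\beta|$ and $Y=\alpha^p/p+\beta^q/q$ as $X=P_1\Lambda_X P_1^H$ and $Y=P_2\Lambda_Y P_2^H$ with eigenvalues in decreasing order, and set $V=P_1P_2^H$. Then $V^HXV=P_2\Lambda_X P_2^H\le P_2\Lambda_Y P_2^H=Y$, since $\Lambda_X\le \Lambda_Y$ entrywise on the diagonal. Composing $V$ with the unitary $U_B$ from Step 1 gives the $U$ required by the lemma.

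The principal obstacle is Step 2: Horn's inequality together with the scalar Young inequality delivers only weak majorization (partial sums of eigenvalues), whereas the unitary-similarity conclusion of Step 3 requires the strictly stronger pointwise eigenvalue inequality. Closing this gap---by exploiting the specific algebraic structure of the sum $\alpha^p/p+\beta^q/q$ rather than treating the two summands as independent---is where the delicate matrix-analytic core of the proof lies.
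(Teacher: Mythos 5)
First, note that the paper offers no proof of this lemma at all: it is imported verbatim from Ando's 1995 paper and used as a black box in the proof of the tensor Young inequality. So your attempt is being measured against Ando's original argument rather than anything in this paper.

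Your Steps 1 and 3 are correct and standard: the polar-decomposition computation $|AB^H|=U_B\bigl|\,|A|\,|B|\,\bigr|U_B^H$ reduces everything to positive semidefinite $\alpha,\beta$, and the diagonalization lift correctly converts a pointwise eigenvalue inequality $\lambda_k(|\alpha\beta|)\le\lambda_k(\alpha^p/p+\beta^q/q)$ into the unitary-conjugation statement. The genuine gap is Step 2, which is the entire content of Ando's theorem, and the majorization route you sketch cannot be repaired. Horn plus scalar Young gives $\sigma(\alpha\beta)\prec_w\bigl(\sigma_k(\alpha)^p/p+\sigma_k(\beta)^q/q\bigr)_k$, but the comparison sequence on the right is itself an \emph{upper} bound for the eigenvalues of the matrix sum: Ky Fan's inequality gives $\sum_{i\le k}\lambda_i\bigl(\tfrac1p\alpha^p+\tfrac1q\beta^q\bigr)\le\tfrac1p\sum_{i\le k}\sigma_i(\alpha)^p+\tfrac1q\sum_{i\le k}\sigma_i(\beta)^q$, i.e.\ both $\sigma(\alpha\beta)$ and $\lambda(\tfrac1p\alpha^p+\tfrac1q\beta^q)$ are weakly majorized by the same sequence, which says nothing about their relative order. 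So the chain does not close even at the level of weak majorization, let alone the pointwise bound Step 3 requires. Ando's actual proof establishes $s_j(ST)\le\lambda_j(\tfrac1pS^p+\tfrac1qT^q)$ for positive semidefinite $S,T$ by reducing to the assertion that $\tfrac1pS^p+\tfrac1qT^q\le I$ forces $\|ST\|\le1$, proved via operator monotonicity of $t\mapsto t^r$ ($0\le r\le1$) after reducing to $p\ge2$, and then obtains the $j$-th singular value bound by a compression argument; none of this is present in, or recoverable from, your sketch. As written, the proposal proves the easy reductions around the theorem but not the theorem itself.
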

\hskip 2em It turns out that the generalized tensor Young inequality can be stated below.
\begin{theorem}
Let $p,q>0$, $1/p+1/q=1$. Then for any pair $\mathcal{A},\mathcal{B}\in \mathbb{R}^{n\times n\times n_3},$
there exists an orthogonal tensor $\mathcal{U}$ depending on $\mathcal{A}$ and $\mathcal{B}$, such that
\begin{equation*}\label{Ando}
	\mathcal{U}^T*|\mathcal{A}*\mathcal{B}^T|*\mathcal{U}\le\frac{1}{p}|\mathcal{A}|^p+\frac{1}{q}|\mathcal{B}|^q.
\end{equation*}
\end{theorem}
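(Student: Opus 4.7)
The plan is to reduce the tensor Young inequality to its matrix analog in the Fourier domain and then assemble the resulting unitaries into a single orthogonal tensor. First I would apply the block-diagonalization (\ref{operator}) to obtain complex matrices $\bar{A}^{(i)}, \bar{B}^{(i)}$ for $i=1,\ldots,n_3$. By Lemma \ref{Young_lemma}, for each $i$ there is an $n\times n$ unitary $U^{(i)}$ satisfying
\begin{equation*}
(U^{(i)})^H\,\bigl|\bar{A}^{(i)}(\bar{B}^{(i)})^H\bigr|\,U^{(i)}\le\frac{|\bar{A}^{(i)}|^p}{p}+\frac{|\bar{B}^{(i)}|^q}{q}.
\end{equation*}
Forming $\bar{U}=\mathtt{diag}(U^{(1)},\ldots,U^{(n_3)})$ and defining $\mathcal{U}$ as the tensor whose FFT along the third mode equals $\bar{U}$ yields a natural candidate whose Fourier image block-diagonally satisfies the required bound, after which Corollary \ref{relation_dayu} would translate the estimate back to the claimed tensor inequality.

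The hard part will be verifying that $\mathcal{U}$ is a genuine \emph{real} orthogonal tensor, since Lemma \ref{Young_lemma} is non-constructive and the $U^{(i)}$ it supplies are generically complex. Because $\mathcal{A}$ and $\mathcal{B}$ are real, the Fourier slices obey the conjugate-symmetry relations $\bar{A}^{(n_3-i+2)}=\overline{\bar{A}^{(i)}}$ and $\bar{B}^{(n_3-i+2)}=\overline{\bar{B}^{(i)}}$ for $i\ge 2$, while $\bar{A}^{(1)},\bar{B}^{(1)}$ (and the self-conjugate middle slice when $n_3$ is even) are real. I plan to exploit the freedom in Lemma \ref{Young_lemma} by choosing $U^{(1)}$ real orthogonal — which is possible because Ando's proof via polar decomposition can be carried out over $\mathbb{R}$ when the data is real — and analogously for the self-conjugate slice, and then for $i\ge 2$ \emph{defining} $U^{(n_3-i+2)}:=\overline{U^{(i)}}$. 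A short verification based on the identities $\overline{X^H}=X^T$ and $|\overline{X}|=\overline{|X|}$ shows that this conjugated choice automatically satisfies the matrix Young inequality for its own slice. With this conjugate-symmetric selection, the inverse FFT of $\bar{U}$ returns a real tensor, and since each block $U^{(i)}$ is unitary we have $\bar{U}^H\bar{U}=\bar{U}\bar{U}^H=I_{nn_3}$; Lemma \ref{relation} then yields $\mathcal{U}^T*\mathcal{U}=\mathcal{U}*\mathcal{U}^T=\mathcal{I}$, so $\mathcal{U}$ is orthogonal as required.

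Finally, stacking the $n_3$ slice-wise inequalities block-diagonally produces
\begin{equation*}
\bar{U}^H\,\overline{|\mathcal{A}*\mathcal{B}^T|}\,\bar{U}\le\frac{1}{p}\,\overline{|\mathcal{A}|^p}+\frac{1}{q}\,\overline{|\mathcal{B}|^q},
\end{equation*}
where the identification of the $i$-th Fourier block of $|\mathcal{A}*\mathcal{B}^T|$ with $|\bar{A}^{(i)}(\bar{B}^{(i)})^H|$, and of $|\mathcal{A}|^p$ with $|\bar{A}^{(i)}|^p$, follows from Lemma \ref{relation} together with the spectral calculus for t-positive semidefinite tensors guaranteed by Lemma \ref{relation_definite}. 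A final appeal to Corollary \ref{relation_dayu} yields the desired tensor inequality, completing the sketch.
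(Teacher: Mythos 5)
Your proposal follows essentially the same route as the paper: block-diagonalize via the DFT, apply Ando's matrix Young inequality (Lemma \ref{Young_lemma}) slice by slice, assemble the resulting unitaries into a block-diagonal $\bar{U}$, pull back by the inverse FFT, and conclude with Corollary \ref{relation_dayu}. The only difference is that you explicitly enforce the conjugate-symmetric selection $U^{(n_3-i+2)}=\overline{U^{(i)}}$ (with real choices on the self-conjugate slices) so that the inverse FFT returns a genuinely \emph{real} orthogonal tensor --- a point the paper's proof dismisses with ``Obviously, $\mathcal{U}$ is an orthogonal tensor,'' so your added care closes a small gap rather than changing the argument.
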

\begin{proof}
First, we apply (\ref{operator}) to $\mathcal{A},\mathcal{B}$ and then we get $\bar{A}$ and $\bar{B}$,
which are block diagonal matrices. According to Lemma \ref{Young_lemma}, for any pair $\bar{A}^{(i)},\bar{B}^{(i)}$, there is a unitary matrix $U^{(i)}$ such that
\begin{equation*}
(U^{(i)})^H|\bar{A}^{(i)}(\bar{B}^{(i)})^H|U^{(i)}\le|\bar{A}^{(i)}|^p/p+|\bar{B}^{(i)}|^q/q.
\end{equation*}
Set
\begin{equation*}
\mathcal{\bar{U}}=\mathtt{fold}\left(\begin{bmatrix}
	U^{(1)}\\U^{(2)}\\\vdots\\ U^{(n_3)}
\end{bmatrix}\right), \mathcal{U}=\mathtt{ifft}\left(\bar{\mathcal{U}},[\ ],3\right),
\end{equation*}
and
\begin{equation*}
	\mathcal{\bar{C}}=\mathtt{fold}\left(\begin{bmatrix}
		|\bar{A}^{(1)}(\bar{B}^{(1)})^H|\\|\bar{A}^{(2)}(\bar{B}^{(2)})^H|\\\vdots\\ |\bar{A}^{(n_3)}(\bar{B}^{(n_3)})^H|
	\end{bmatrix}\right), \mathcal{C}=\mathtt{ifft}\left(\bar{C},[\ ],3\right).
\end{equation*}
Obviously, $\mathcal{U}$ is an orthogonal tensor and $\mathcal{C}=|\mathcal{A}*\mathcal{B}^T|$. Analogously, we can get $|\mathcal{A}|^p$ and  $|\mathcal{B}|^q$.
\end{proof}
\hskip 2em In particular, when we take $p=q=2$, we get a corollary which is a tensor generalization of the result obtained by Bhatia and Kittaneh in \cite{Bhatia1990}.
\begin{Corollary}
For any pair $\mathcal{A},\mathcal{B}\in \mathbb{R}^{n\times n\times n_3}$, there is an orthogonal tensor $\mathcal{U}$ depending on $\mathcal{A}$ and $\mathcal{B}$, such that
\begin{equation*}
	\mathcal{U}^T*|\mathcal{A}*\mathcal{B}^T|*\mathcal{U}\le\frac{1}{2}|\mathcal{A}|^2+\frac{1}{2}|\mathcal{B}|^2.
\end{equation*}
\end{Corollary}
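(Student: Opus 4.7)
The plan is to derive this corollary as the immediate $p=q=2$ specialization of the tensor Young inequality just established. First I would check that the conjugacy condition $1/p+1/q=1$ is met, which is trivial for $p=q=2$. The preceding theorem then supplies an orthogonal tensor $\mathcal{U}$ depending on $\mathcal{A}$ and $\mathcal{B}$ such that
\begin{equation*}
\mathcal{U}^T * |\mathcal{A}*\mathcal{B}^T| * \mathcal{U} \le \tfrac{1}{p}|\mathcal{A}|^p + \tfrac{1}{q}|\mathcal{B}|^q,
\end{equation*}
and substituting $p=q=2$ gives exactly the claimed bound. In this route there is essentially no content beyond a one-line substitution.

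If a self-contained argument is preferred, I would mirror the Fourier-domain strategy used in the proof of the general theorem, but feeding in the original Bhatia-Kittaneh matrix inequality from \cite{Bhatia1990} rather than the more general Lemma \ref{Young_lemma}. Concretely, block-diagonalize $\mathcal{A}$ and $\mathcal{B}$ via (\ref{operator}) to obtain $\bar{A}^{(i)}$ and $\bar{B}^{(i)}$, apply the matrix inequality slice-by-slice to produce unitary blocks $U^{(i)}$ with
\begin{equation*}
(U^{(i)})^H |\bar{A}^{(i)}(\bar{B}^{(i)})^H| U^{(i)} \le \tfrac{1}{2}|\bar{A}^{(i)}|^2 + \tfrac{1}{2}|\bar{B}^{(i)}|^2,
\end{equation*}
assemble the $U^{(i)}$ into the frequency-domain tensor $\bar{\mathcal{U}}$, and pull back through the inverse FFT to obtain $\mathcal{U}$. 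Corollary \ref{relation_dayu} then lifts the slice-wise inequalities to the desired tensor inequality.

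I do not anticipate any real obstacle. The one detail worth a second glance is confirming that $\mathcal{U}$ is a genuine orthogonal tensor in the t-product sense: each $U^{(i)}$ is unitary, so $\bar{\mathcal{U}}^H * \bar{\mathcal{U}}$ is block-diagonal identity, which Lemma \ref{relation} converts back to $\mathcal{U}^T * \mathcal{U} = \mathcal{I}$. This bookkeeping step is exactly the one already handled in the general theorem, so invoking the theorem directly is the cleanest route.
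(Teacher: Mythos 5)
Your proposal is correct and matches the paper's treatment: the corollary is obtained exactly as the $p=q=2$ specialization of the preceding tensor Young inequality, with no further argument needed. The self-contained alternative you sketch is also valid but merely re-runs the theorem's own proof with the Bhatia--Kittaneh special case.
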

\section{Tensor norm inequalities}
\hskip 2em In this section, we derive some tensor norm inequalities based on the Frobenius norm and the spectral norm. Particularly, we will prove some norm inequalities related to the tensor power.

\hskip 2em Notice that any tensor $\mathcal{T}\in\mathbb{C}^{n\times n\times n_3}$ can be written as $\mathcal{T}=\mathcal{A}+\texttt{i}\mathcal{B}$, where $\mathcal{A},\mathcal{B}\in\mathbb{R}^{n\times n\times n_3}$, and $(\mathtt{i}\mathcal{B})(i,j,k)=\mathtt{i}\mathcal{B}(i,j,k).$ Then we have
\begin{equation*}
	\begin{aligned}
		&(F_{n_3}\otimes I_{n})\cdot\mathtt{bcirc}(\mathcal{T})\cdot(F_{n_3}^{-1}\otimes I_{n})\\=&(F_{n_3}\otimes I_{n})\cdot\mathtt{bcirc}(\mathcal{A})\cdot(F_{n_3}^{-1}\otimes I_{n})+
		(F_{n_3}\otimes I_{n})\cdot\mathtt{bcirc}(\texttt{i}\mathcal{B})\cdot(F_{n_3}^{-1}\otimes I_{n})\\
		=&\bar{A}+\mathtt{i}(F_{n_3}\otimes I_{n})\cdot\mathtt{bcirc}(\mathcal{B})\cdot(F_{n_3}^{-1}\otimes I_{n})\\
		=&\bar{A}+\mathtt{i}\bar{B}.
	\end{aligned}
\end{equation*}
Before we move on to several classical inequalities, we mention some inequalities for the complex tensors.
\begin{theorem}
Let $\mathcal{T}=\mathcal{A} + \mathtt i\mathcal{B}$, $\mathcal{A},\mathcal{B}\in\mathbb{R}^{n\times n\times n_3}$. \\
(a) if $\mathcal{A}$ and $\mathcal{B}$ are symmetric, then
\begin{equation*}
 (\|\mathcal{A}\|_2^2+\|\mathcal{B}\|_2^2)\le\|\mathcal{T}\|_2^2\le2(\|\mathcal{A}\|_2^2+\|\mathcal{B}\|_2^2),
\end{equation*}
\begin{equation*}
	4(\|\mathcal{A}\|_F^2+\|\mathcal{B}\|_F^2)\ge\|\mathcal{T}\|_F^2\ge(\|\mathcal{A}\|_F^2+\|\mathcal{B}\|_F^2),
\end{equation*}
\begin{equation*}
	\|(\mathcal{A}^2+\mathcal{B}^2)^{1/2}\|_2\le\|T\|_2\le\sqrt{2}\|(\mathcal{A}^2+\mathcal{B}^2)^{1/2}\|_2,
\end{equation*}
\begin{equation*}
	\|(\mathcal{A}^2+\mathcal{B}^2)^{1/2}\|_F=\|T\|_F.
\end{equation*}

(b) if $\mathcal{A}$ is symmetric t-positive semidefinite and $\mathcal{B}$ is symmetric, then
\begin{equation*}\label{norm_eq_1}
	\|\mathcal{T}\|^2_2\le\|\mathcal{A}\|^2_2+2\|\mathcal{B}\|^2_2,\
	\|\mathcal{T}\|^2_F\ge\|\mathcal{A}\|^2_F+2\|\mathcal{B}\|^2_F.
\end{equation*}

(c) if $\mathcal{A}$ and $\mathcal{B}$ are symmetric t-positive semidefinite, then
\begin{equation*}
	\|\mathcal{T}\|^2_2\le\|\mathcal{A}\|^2_2+\|\mathcal{B}\|^2_2,\
	\|\mathcal{T}\|^2_F\le\|\mathcal{A}\|^2_F+\|\mathcal{B}\|^2_F.
\end{equation*}
\end{theorem}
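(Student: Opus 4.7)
The plan is to reduce every inequality to a blockwise matrix inequality via the Fourier-domain identity $\bar T=\bar A+\mathtt i\bar B$ established in the display just before the statement. The first step would be to invoke Lemma \ref{relation_norm} to write $\|\mathcal T\|_2=\|\bar T\|_2=\max_i\|\bar A^{(i)}+\mathtt i\bar B^{(i)}\|_2$ and $\|\mathcal T\|_F^2=n_3^{-1}\|\bar T\|_F^2=n_3^{-1}\sum_i\|\bar A^{(i)}+\mathtt i\bar B^{(i)}\|_F^2$, with analogous identities for $\mathcal A$, $\mathcal B$, and $(\mathcal A^2+\mathcal B^2)^{1/2}$. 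The structural hypotheses then translate cleanly: $\mathcal A$ symmetric forces each block $\bar A^{(i)}$ to be Hermitian, and $\mathcal A\ge 0$ upgrades this to Hermitian positive semidefinite via Lemma \ref{relation_definite}, and similarly for $\mathcal B$. So the theorem reduces to proving the corresponding matrix inequalities for every block $M=X+\mathtt i Y$ under the stated Hermitian or PSD hypotheses.

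For part (a), with $X,Y$ Hermitian, I would exploit the Cartesian identity $M^*M+MM^*=2(X^2+Y^2)$, together with $M^*M=X^2+Y^2+\mathtt i[X,Y]$. For the pair involving $(X^2+Y^2)^{1/2}$, the upper bound comes from $M^*M\le 2(X^2+Y^2)$, and the lower bound from the fact that $M^*M$ and $MM^*$ are both positive semidefinite with common spectral norm $\|M\|_2^2$, giving $\|X^2+Y^2\|_2=\tfrac12\|M^*M+MM^*\|_2\le\|M\|_2^2$. The Frobenius equality $\|M\|_F^2=\|X\|_F^2+\|Y\|_F^2$, from which both Frobenius statements in (a) and the Frobenius equality with $(X^2+Y^2)^{1/2}$ follow, is obtained either by taking traces (using $\operatorname{tr}[X,Y]=0$) or, even more directly, from the spatial-domain formula $|\mathcal A_{jkl}+\mathtt i\mathcal B_{jkl}|^2=\mathcal A_{jkl}^2+\mathcal B_{jkl}^2$. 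The remaining spectral pair is handled by the triangle inequality $\|M\|_2\le\|X\|_2+\|Y\|_2$ combined with $(a+b)^2\le 2(a^2+b^2)$.

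For parts (b) and (c), the same reduction applies; the extra PSD hypotheses sharpen the spectral constants via the pointwise bound $Z^2\le\|Z\|_2\,Z$ for Hermitian $Z\ge 0$, and the Frobenius bounds follow from the previous equality together with the one-sided sign of cross terms $\operatorname{tr}(XY)\ge 0$ for two PSD matrices. The main obstacle I anticipate is not the Fourier reduction, which is essentially automatic, but matching the correct matrix identity to each of the several numerically different bounds: in particular, the additive lower bound in (a) does not follow blockwise from $\max_i\|M^{(i)}\|_2^2\ge\|X^{(i_\star)}\|_2^2$ alone, because the indices maximising $\|\bar A\|_2$ and $\|\bar B\|_2$ need not coincide, so justifying it would require a joint argument on each block (for instance, using $M^*M\ge X^2+Y^2-|[X,Y]|$ or a direct variational argument) rather than a separate triangle estimate.
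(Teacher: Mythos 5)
Your reduction is exactly the paper's: the paper likewise passes to $\bar{T}=\bar{A}+\mathtt{i}\bar{B}$ and uses Lemma \ref{relation_norm} to trade $\|\cdot\|_2$ and $n_3^{-1/2}\|\cdot\|_F$ between the tensor and its block-diagonalization, but it then simply cites Zhan's Theorem 3.21 for all the block-level matrix inequalities, whereas you attempt to reprove them from scratch. Where your elementary arguments are carried out they are correct and arguably more informative than the citation: the identity $M^{*}M+MM^{*}=2(X^2+Y^2)$ does give both bounds in the $(\mathcal{A}^2+\mathcal{B}^2)^{1/2}$ pair, and the entrywise computation gives the exact identity $\|\mathcal{T}\|_F^2=\|\mathcal{A}\|_F^2+\|\mathcal{B}\|_F^2$, which settles the Frobenius statements of (a).

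There are, however, two genuine gaps. First, the additive spectral lower bound in (a), which you correctly single out as the obstacle, cannot be closed by any joint block argument because the inequality is false as printed: take $n_3=1$, $\mathcal{A}=\mathrm{diag}(1,0)$, $\mathcal{B}=\mathrm{diag}(0,1)$, so $\mathcal{T}=\mathrm{diag}(1,\mathtt{i})$ and $\|\mathcal{T}\|_2^2=1<2=\|\mathcal{A}\|_2^2+\|\mathcal{B}\|_2^2$. Only the weaker bound $\tfrac12\left(\|\mathcal{A}\|_2^2+\|\mathcal{B}\|_2^2\right)\le\|\mathcal{T}\|_2^2$ survives, via $X=(M+M^{*})/2$, $Y=(M-M^{*})/(2\mathtt{i})$. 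Your proposed repairs (e.g.\ $M^{*}M\ge X^2+Y^2-|[X,Y]|$) therefore cannot work. Second, your own identity $\|\mathcal{T}\|_F^2=\|\mathcal{A}\|_F^2+\|\mathcal{B}\|_F^2$, which holds with no hypotheses whatsoever, directly contradicts the Frobenius claim of (b) unless $\mathcal{B}=0$; no sign condition on cross terms such as $\operatorname{tr}(XY)\ge 0$ can manufacture the coefficient $2$ there, so the sketch for (b) cannot be completed as stated. The spectral claims of (b) and (c) are also left at the level of assertion: the bound $Z^2\le\|Z\|_2Z$ for $Z\ge 0$ only reproduces the factor-$2$ estimate already obtained in (a), not the sharper constants claimed. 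In short, the parts of the theorem that are provable you prove by essentially the intended mechanism, but the remaining claims require either the precise (and different) constants of the cited matrix theorem or a correction of the statement itself.
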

\begin{proof}
The proof makes use of \cite[Theorem 3.21]{Zhan2002} and Lemma \ref{relation_norm}. Indeed,
\begin{equation*}
\begin{aligned}
&4(\|\mathcal{A}\|_F^2+\|\mathcal{B}\|_F^2)=\frac{4}{n_3}(\|\bar{A}\|_F^2+\|\bar{B}\|_F^2)\\
\ge&\frac{1}{n_3}\|\bar{T}\|_F^2=\|\mathcal{T}\|_F^2\ge\frac{1}{n_3}(\|\bar{A}\|_F^2+\|\bar{B}\|_F^2)=(\|\mathcal{A}\|_F^2+\|\mathcal{B}\|_F^2),
\end{aligned}
\end{equation*}
and
\begin{equation*}
\begin{aligned}
&\|(\mathcal{A}^2+\mathcal{B}^2)^{1/2}\|_2=\|(\bar{A}^2+\bar{B}^2)^{1/2}\|_2\\\le&\|\bar{T}\|_2=\|T\|_2\le\sqrt{2}\|(\bar{A}^2+\bar{B}^2)^{1/2}\|_2=\sqrt{2}\|(\mathcal{A}^2+\mathcal{B}^2)^{1/2}\|_2.
\end{aligned}
\end{equation*}
A similar procedure can be used for (b) and (c).
\end{proof}

\hskip 2em We are now set to state three classical inequalities.
The arithmetic-geometric mean inequality for complex numbers $a, b$ is $|ab|\le(|a|^2+|b|^2)/2$. One tensor version of this inequality is the following result.
\begin{theorem}
For any real tensors $\mathcal{A}$, $\mathcal{X}$ and $\mathcal{B}$ of appropriate size, we have
\begin{equation*}
\|\mathcal{A}*\mathcal{X}*\mathcal{B}^T\|\le\frac{1}{2}\|\mathcal{A}^T*\mathcal{X}+\mathcal{X}*\mathcal{B}^T*\mathcal{B}\|
\end{equation*}
for the Frobenius norm and the spectral norm.
\end{theorem}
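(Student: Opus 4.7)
The plan is to reduce the inequality to the classical matrix arithmetic-geometric mean inequality of Bhatia and Kittaneh by passing to the block-diagonalized Fourier representation (\ref{operator}), applying the matrix result on each diagonal block, and then transferring back with Lemma \ref{relation_norm}. (As written the factor $\mathcal{A}^T*\mathcal{X}$ on the right-hand side is dimensionally incompatible with $\mathcal{X}*\mathcal{B}^T*\mathcal{B}$, so I read the intended right-hand side as $\tfrac{1}{2}\|\mathcal{A}^T*\mathcal{A}*\mathcal{X}+\mathcal{X}*\mathcal{B}^T*\mathcal{B}\|$, which is the exact t-product analog of the matrix form $\|AXB^H\|\le\tfrac{1}{2}\|A^HAX+XB^HB\|$.)

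First, by Lemma \ref{relation} together with the fact that tensor transpose corresponds in the Fourier domain to slicewise Hermitian conjugation, so that $\overline{\mathcal{A}^T}$ has $i$-th frontal slice $(\bar{A}^{(i)})^H$, the $i$-th diagonal block of $\overline{\mathcal{A}*\mathcal{X}*\mathcal{B}^T}$ is $\bar{A}^{(i)}\bar{X}^{(i)}(\bar{B}^{(i)})^H$, while the $i$-th block of $\overline{\mathcal{A}^T*\mathcal{A}*\mathcal{X}+\mathcal{X}*\mathcal{B}^T*\mathcal{B}}$ is $(\bar{A}^{(i)})^H\bar{A}^{(i)}\bar{X}^{(i)}+\bar{X}^{(i)}(\bar{B}^{(i)})^H\bar{B}^{(i)}$. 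This reduces the tensor claim to a family of matrix inequalities indexed by $i=1,\ldots,n_3$.

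On each block I then invoke the Bhatia--Kittaneh inequality \cite{Bhatia1990}: for any matrices $A,X,B$ of compatible size and every unitarily invariant norm, $\|AXB^H\|\le\tfrac{1}{2}\|A^HAX+XB^HB\|$. Applied to the $i$-th block in either the spectral or the Frobenius norm, this gives
\[
\|\bar{A}^{(i)}\bar{X}^{(i)}(\bar{B}^{(i)})^H\|\ \le\ \tfrac{1}{2}\,\|(\bar{A}^{(i)})^H\bar{A}^{(i)}\bar{X}^{(i)}+\bar{X}^{(i)}(\bar{B}^{(i)})^H\bar{B}^{(i)}\|.
\]

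Finally I translate back via Lemma \ref{relation_norm}. For the spectral norm, $\|\mathcal{M}\|_2=\|\bar M\|_2=\max_i\|\bar M^{(i)}\|_2$ because $\bar M$ is block diagonal, and the slicewise inequality is preserved under the maximum with the factor $1/2$ pulled out. For the Frobenius norm, $\|\mathcal{M}\|_F^2=\tfrac{1}{n_3}\sum_i\|\bar M^{(i)}\|_F^2$; squaring the slicewise inequality, summing over $i$, dividing by $n_3$, and taking square roots preserves the $1/2$ constant and yields the claimed bound. The only real ingredient is the matrix Bhatia--Kittaneh inequality, and the sole tensor-side subtlety is the transpose-to-Hermitian-conjugate correspondence in the Fourier domain, which is a standard feature of the t-product framework.
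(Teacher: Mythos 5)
Your proof is correct and follows essentially the same route as the paper: block-diagonalize via the Fourier transform, apply the matrix arithmetic--geometric mean inequality (the paper cites \cite[Theorem 4.19]{Zhan2002}, which is the same Bhatia--Kittaneh result), and transfer back with Lemma \ref{relation_norm}. You also correctly spotted that the stated right-hand side should read $\frac{1}{2}\|\mathcal{A}^T*\mathcal{A}*\mathcal{X}+\mathcal{X}*\mathcal{B}^T*\mathcal{B}\|$; the paper's own proof silently uses $\bar{A}^H\bar{A}\bar{X}$ in the intermediate step, confirming the typo.
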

\begin{proof}
Upon consideration of \cite[Theorem 4.19]{Zhan2002} and Lemma \ref{relation_norm}, we see that
\begin{equation*}
\begin{aligned}
&\|\mathcal{A}*\mathcal{X}*\mathcal{B}^T\|_F=\frac{1}{\sqrt{n_3}}\|\bar{A}\bar{X}\bar{B}^H\|_F\\\le&\frac{1}{2\sqrt{n_3}}\|\bar{A}^H\bar{A}\bar{X}+\bar{X}\bar{B}^H\bar{B}\|_F
=\frac{1}{2}\|\mathcal{A}^T*\mathcal{X}+\mathcal{X}*\mathcal{B}^T*\mathcal{B}\|_F.
\end{aligned}
\end{equation*}
The proof of the spectral norm case is entirely analogous.
\end{proof}

If, additionally, the tensors $\mathcal{A},\mathcal{X}$ and $\mathcal{B}$ are symmetric t-positive definite, then the following theorem arises by
combining \cite[Theorems 4.24 and 4.25]{Zhan2002} and Lemma \ref{relation_norm}.
\begin{theorem}
Let $ \mathcal{A},\mathcal{X},\mathcal{B}\in\mathbb{R}^{n\times n\times n_3}$ with $ \mathcal{A}$ and $\mathcal{B}$ symmetric t-positive semidefinite. Then for the Frobenius norm and the spectral norm, the following two inequalities hold.\\
(1) For any real numbers $r, t$ satisfying $1 \le2r \le 3$, $-2<t\le2$,
\begin{equation*}
	(2+t)\|\mathcal{A}^r*\mathcal{X}*\mathcal{B}^{2-r}+\mathcal{A}^{2-r}*\mathcal{X}*\mathcal{B}^r\|\le2\|\mathcal{A}^2*\mathcal{X}+t\mathcal{A}*\mathcal{X}*\mathcal{B}+\mathcal{X}*\mathcal{B}^2\|.
\end{equation*}
(2)
\begin{equation*}
	4\|\mathcal{A}*\mathcal{B}\|\le\|(\mathcal{A}+\mathcal{B})^2\|.
\end{equation*}
\end{theorem}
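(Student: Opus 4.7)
The plan is to reduce each inequality to its matrix analog block-by-block via the Fourier-domain representation, then reassemble. By Lemma \ref{relation_definite}, the hypothesis $\mathcal{A},\mathcal{B}\ge 0$ translates to $\bar{A}^{(i)},\bar{B}^{(i)}\ge 0$ for every $i=1,\ldots,n_3$. By Lemma \ref{relation}, each t-product monomial appearing in the two inequalities block-diagonalizes: the $i$-th block of the Fourier image of $\mathcal{A}^r*\mathcal{X}*\mathcal{B}^{2-r}$ is $(\bar{A}^{(i)})^r\bar{X}^{(i)}(\bar{B}^{(i)})^{2-r}$, the $i$-th block of $\mathcal{A}^2*\mathcal{X}+t\mathcal{A}*\mathcal{X}*\mathcal{B}+\mathcal{X}*\mathcal{B}^2$ is $(\bar{A}^{(i)})^2\bar{X}^{(i)}+t\bar{A}^{(i)}\bar{X}^{(i)}\bar{B}^{(i)}+\bar{X}^{(i)}(\bar{B}^{(i)})^2$, and similarly for (2).

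With this setup, I would invoke \cite[Theorem 4.24]{Zhan2002} on each triple $(\bar{A}^{(i)},\bar{X}^{(i)},\bar{B}^{(i)})$ to obtain, for part (1),
\begin{equation*}
(2+t)\bigl\|(\bar{A}^{(i)})^r\bar{X}^{(i)}(\bar{B}^{(i)})^{2-r}+(\bar{A}^{(i)})^{2-r}\bar{X}^{(i)}(\bar{B}^{(i)})^r\bigr\|\le 2\bigl\|(\bar{A}^{(i)})^2\bar{X}^{(i)}+t\bar{A}^{(i)}\bar{X}^{(i)}\bar{B}^{(i)}+\bar{X}^{(i)}(\bar{B}^{(i)})^2\bigr\|
\end{equation*}
for each $i$, in both the Frobenius and spectral norm, under the hypothesis $1\le 2r\le 3$, $-2<t\le 2$. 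For part (2), \cite[Theorem 4.25]{Zhan2002} applied to each block yields $4\|\bar{A}^{(i)}\bar{B}^{(i)}\|\le\|(\bar{A}^{(i)}+\bar{B}^{(i)})^2\|$.

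To lift these block-wise inequalities to the whole block-diagonal matrix, I would use two standard facts for block-diagonal $M=\mathrm{diag}(M^{(1)},\ldots,M^{(n_3)})$: $\|M\|_F^2=\sum_i\|M^{(i)}\|_F^2$ and $\|M\|_2=\max_i\|M^{(i)}\|_2$. For the spectral norm, the inequality passes immediately by taking the maximum over $i$ of both sides (the common constant pulls out). For the Frobenius norm, squaring both sides of the block-wise inequality and summing over $i$ preserves the common multiplicative constant, and taking the square root completes the step. This yields the corresponding inequality for $\bar{A},\bar{X},\bar{B}$ in place of $\mathcal{A},\mathcal{X},\mathcal{B}$.

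Finally, I would translate back to tensors using Lemma \ref{relation_norm}: for the spectral norm this is immediate since $\|\mathcal{T}\|_2=\|\bar{T}\|_2$, while for the Frobenius norm the factor $1/\sqrt{n_3}$ appears identically on both sides and cancels. The only mild care point (and the closest thing to an obstacle) is bookkeeping: one must verify that every monomial appearing in each inequality really block-diagonalizes in the claimed way, which follows from iterating Lemma \ref{relation} together with the fact that powers of a block-diagonal matrix are again block-diagonal with the corresponding power in each block. With that verified, the two assertions follow simultaneously for both norms.
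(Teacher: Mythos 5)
Your proposal is correct and follows essentially the same route as the paper, which simply combines \cite[Theorems 4.24 and 4.25]{Zhan2002} with the Fourier block-diagonalization and Lemma \ref{relation_norm}. The only cosmetic difference is that you apply the matrix theorems block-by-block and then reassemble, whereas the paper applies them directly to the block-diagonal matrices $\bar{A},\bar{X},\bar{B}$ (which are themselves positive semidefinite); your lifting argument for the two norms is valid, so both versions go through.
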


\hskip 2em Now we turn to the tensor H{\" o}lder inequality.
\begin{theorem}
Let $ \mathcal{A},\mathcal{X}$ and $\mathcal{B}\in\mathbb{R}^{n\times n\times n_3}$ with $\mathcal{A}$ and $\mathcal{B}$ being symmetric t-positive semidefinite. Then
\begin{equation*}
	\left\|\ \left|\mathcal{A}*\mathcal{X}*\mathcal{B}\right|\ \right\|_F\le n_3^{\frac{1}{2p}+\frac{1}{2q}-\frac{1}{2}}\|\ |\mathcal{A}^p*\mathcal{X}|^r\ \|_F^{1/p}
\cdot \|\ |\mathcal{X}*\mathcal{B}^q|^r\ \|_F^{1/q},
\end{equation*}
and
\begin{equation*}
\left\|\ |\mathcal{A}*\mathcal{X}*\mathcal{B}|\ \right\|_2\le \|\ |\mathcal{A}^p*\mathcal{X}|^r\ \|_2^{1/p}
\cdot \|\ |\mathcal{X}*\mathcal{B}^q|^r\ \|_2^{1/q},
\end{equation*}
for all positive real numbers $r, p, q $ with $1/p+1/q=1$.
\end{theorem}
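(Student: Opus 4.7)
The plan is to push both inequalities through the Fourier block diagonalization (\ref{operator}) and reduce the problem to a per-block matrix Hölder inequality. By Lemma \ref{relation}, the $i$-th Fourier block of $\mathcal{A}*\mathcal{X}*\mathcal{B}$ is $\bar{A}^{(i)}\bar{X}^{(i)}\bar{B}^{(i)}$; by Lemma \ref{relation_definite}, each $\bar{A}^{(i)}$ and $\bar{B}^{(i)}$ is Hermitian positive semidefinite, so the blocks of $\mathcal{A}^{p}*\mathcal{X}$ and $\mathcal{X}*\mathcal{B}^{q}$ are $(\bar{A}^{(i)})^{p}\bar{X}^{(i)}$ and $\bar{X}^{(i)}(\bar{B}^{(i)})^{q}$ respectively. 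The t-function $|\,\cdot\,|^{r}$ also acts block-wise, i.e.\ the $i$-th Fourier block of $|\mathcal{Y}|^{r}$ is $|\bar{Y}^{(i)}|^{r}$.

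On each block I would invoke the matrix Hölder inequality from \cite{Zhan2002}: for Hermitian positive semidefinite $A,B$, any matrix $X$, and any unitarily invariant matrix norm $\|\cdot\|_{*}$,
\begin{equation*}
	\||AXB|^{r}\|_{*}\le \||A^{p}X|^{r}\|_{*}^{1/p}\cdot \||XB^{q}|^{r}\|_{*}^{1/q}.
\end{equation*}
For the spectral-norm bound, apply this with $\|\cdot\|_{2}$ to each block, then take the maximum over $i\in\{1,\dots,n_{3}\}$ and use $\max_{i}a_{i}b_{i}\le(\max_{i}a_{i})(\max_{i}b_{i})$ together with $\|\mathcal{Y}\|_{2}=\max_{i}\|\bar{Y}^{(i)}\|_{2}$, which follows from Lemma \ref{relation_norm} and the block-diagonal structure of $\bar{Y}$.

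For the Frobenius-norm bound, apply matrix Hölder with $\|\cdot\|_{F}$ to each block, square both sides, and sum over $i$. The resulting right-hand side has the shape $\sum_{i}a_{i}^{2/p}b_{i}^{2/q}$, to which I would apply the discrete Hölder inequality with conjugate exponents $p,q$ to obtain $\bigl(\sum_{i}a_{i}^{2}\bigr)^{1/p}\bigl(\sum_{i}b_{i}^{2}\bigr)^{1/q}$. Translating each sum back via $\|\mathcal{Y}\|_{F}^{2}=\tfrac{1}{n_{3}}\sum_{i}\|\bar{Y}^{(i)}\|_{F}^{2}$ (Lemma \ref{relation_norm}) produces one factor of $\tfrac{1}{n_{3}}$ on the left and factors of $n_{3}^{1/p}$ and $n_{3}^{1/q}$ on the right, which combine into $n_{3}^{\frac{1}{2p}+\frac{1}{2q}-\frac{1}{2}}$. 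Because $\frac{1}{p}+\frac{1}{q}=1$, this prefactor is in fact $1$; I would retain the $n_{3}$-dependent form in the statement only to expose the dimensional scaling.

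The only genuinely delicate point will be the bookkeeping of the $n_{3}$ powers alongside the interaction between the absolute value, the exponent $r$, and the Frobenius normalization, together with a brief justification that $|\cdot|^{r}$ (a t-function) really is computed block-wise under the DFT. All conceptual content reduces to block diagonalization and the two Hölder inequalities — one matricial, one scalar — so I expect no further technical obstacle.
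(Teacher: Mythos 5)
Your proposal is correct, and it rests on the same key ingredient as the paper — the matrix H{\"o}lder inequality of \cite[Theorem 4.29]{Zhan2002} — but you route it differently. The paper applies that theorem \emph{once}, to the full block-diagonal matrices $\bar{A},\bar{X},\bar{B}$ (noting that $\bar{A},\bar{B}$ are themselves Hermitian positive semidefinite as $nn_3\times nn_3$ matrices), and then converts back with Lemma \ref{relation_norm}; the $n_3$ prefactor falls out immediately from $\|\mathcal{Y}\|_F=n_3^{-1/2}\|\bar{Y}\|_F$. You instead apply the matrix inequality block by block and then recombine: trivially via $\max_i a_i^{1/p}b_i^{1/q}\le(\max_i a_i)^{1/p}(\max_i b_i)^{1/q}$ for the spectral norm, and via a second, scalar H{\"o}lder inequality on $\sum_i a_i^{2/p}b_i^{2/q}$ for the Frobenius norm. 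Both are valid; the paper's version is shorter because it avoids the extra scalar H{\"o}lder step, while yours makes the per-slice structure (and the fact that the prefactor $n_3^{\frac{1}{2p}+\frac{1}{2q}-\frac{1}{2}}$ is actually $1$ when $1/p+1/q=1$) explicit. One remark: your block-wise inequality carries $|{\cdot}|^r$ on the left-hand side, which is the form actually proved in \cite{Zhan2002}; the theorem as stated in the paper writes $\|\,|\mathcal{A}*\mathcal{X}*\mathcal{B}|\,\|$ without the exponent $r$ on the left, so your version is, if anything, the more faithful transcription of the underlying matrix result.
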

\begin{proof}
By \cite[Theorem 4.29]{Zhan2002}, it is easy to verify that
\begin{equation*}
\begin{aligned}
&\left\|\ |\mathcal{A}*\mathcal{X}*\mathcal{B}|\ \right\|_F=\frac{1}{\sqrt{n_3}}\left\|\ |\bar{A}\bar{X}\bar{B}|\ \right\|_F\\
\le&\frac{1}{\sqrt{n_3}}\left(\left\|\ |\bar{A}^p\bar{X}|^r\ \right \|^{1/p}_F\cdot\left\|\ |\bar{X}\bar{B}^q|^r\ \right\|^{1/q}_F\right)\\
=&n_3^{\frac{1}{2p}+\frac{1}{2q}-\frac{1}{2}}\left\|\ |\mathcal{A}^p*\mathcal{X}|^r\ \right\|_F^{1/p}
\cdot \|\ |\mathcal{X}*\mathcal{B}^q|^r\ \|_F^{1/q}.
\end{aligned}
\end{equation*}
The second inequality follows by mimicking the above argument.
\end{proof}
\begin{Corollary}
For $ \mathcal{A},\mathcal{B}\in\mathbb{R}^{n\times n\times n_3}$, then
\begin{equation*}
\left\|\ |\mathcal{A}*\mathcal{B}|^r\ \right\|_F\le n_3^{{\frac{1}{2p}+\frac{1}{2q}-\frac{1}{2}}}\left\|\ |\mathcal{A}|^{pr}\ \right\|_F^{1/p}\cdot\left\|\ |\mathcal{B}|^{qr}\ \right\|_F^{1/q},
\end{equation*}
and
\begin{equation*}
	\left\|\ |\mathcal{A}*\mathcal{B}|^r\ \right\|_2\le \left\|\ |\mathcal{A}|^{pr}\ \right\|_2^{1/p}\cdot\left\|\ |\mathcal{B}|^{qr}\ \right\|_2^{1/q},
\end{equation*}
where $r, p, q$ are positive real numbers with $1/p+1/q=1$.
\end{Corollary}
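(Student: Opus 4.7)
The plan is to derive both inequalities as corollaries of the corresponding matrix Hölder-type inequality in \cite[Theorem 4.29]{Zhan2002} (or its companion corollary), applied block-wise in the Fourier domain, and then to translate the result back through Lemma \ref{relation_norm}. The overall scheme mirrors the proof of the preceding theorem, but instead of specializing $\mathcal{X}$ there (which would force the unavailable positive-semidefiniteness hypothesis on $\mathcal{A}$ and $\mathcal{B}$), I would invoke the matrix Hölder inequality directly for each frontal slice of $\bar{A}$ and $\bar{B}$.

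The first technical step I would record is the identity
\begin{equation*}
\overline{\,|\mathcal{A}|^{s}\,}^{(i)} \;=\; |\bar{A}^{(i)}|^{s}
\qquad\text{and}\qquad
\overline{\,|\mathcal{A}*\mathcal{B}|^{s}\,}^{(i)} \;=\; |\bar{A}^{(i)}\bar{B}^{(i)}|^{s},
\end{equation*}
valid for every real $s>0$ and every $i=1,\dots,n_3$. This follows from $|\mathcal{A}|=(\mathcal{A}^{T}*\mathcal{A})^{1/2}$ together with Lemma \ref{relation} and the fact that tensor transposition corresponds to Hermitian conjugation slice-wise in the Fourier domain, combined with the block-diagonal structure of $\bar{A}$, which makes the t-function $(\,\cdot\,)^{1/2}$ and the power $(\,\cdot\,)^{s}$ act independently on each $\bar{A}^{(i)}$.

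With these identities in hand, I would write, for the Frobenius case,
\begin{equation*}
\bigl\|\,|\mathcal{A}*\mathcal{B}|^{r}\,\bigr\|_{F}
\;=\; \tfrac{1}{\sqrt{n_3}}\bigl\|\,|\bar{A}\bar{B}|^{r}\,\bigr\|_{F},
\end{equation*}
apply the matrix inequality $\bigl\|\,|\bar{A}\bar{B}|^{r}\,\bigr\|_{F}\le \bigl\|\,|\bar{A}|^{pr}\,\bigr\|_{F}^{1/p}\bigl\|\,|\bar{B}|^{qr}\,\bigr\|_{F}^{1/q}$ slice-wise (which, summed over the $n_3$ diagonal blocks through Hölder's inequality for sequences, still yields the same bound for the full block-diagonal $\bar{A}\bar{B}$), and finally convert $\bigl\|\,|\bar{A}|^{pr}\,\bigr\|_{F}$ and $\bigl\|\,|\bar{B}|^{qr}\,\bigr\|_{F}$ back to tensor norms via Lemma \ref{relation_norm}. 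The bookkeeping of $n_3$-factors then gives the exponent $\tfrac{1}{2p}+\tfrac{1}{2q}-\tfrac{1}{2}$, since one gains a factor $n_3^{1/(2p)}$ from $\bigl\|\,|\bar{A}|^{pr}\,\bigr\|_{F}^{1/p}$, a factor $n_3^{1/(2q)}$ from $\bigl\|\,|\bar{B}|^{qr}\,\bigr\|_{F}^{1/q}$, and a factor $n_3^{-1/2}$ from the left-hand side.

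The spectral case is simpler: since $\|\mathcal{X}\|_2=\|\bar{X}\|_2$ without any $n_3$-prefactor, all scaling constants cancel, and I can read off
\begin{equation*}
\bigl\|\,|\mathcal{A}*\mathcal{B}|^{r}\,\bigr\|_{2}=\bigl\|\,|\bar{A}\bar{B}|^{r}\,\bigr\|_{2}
\le \bigl\|\,|\bar{A}|^{pr}\,\bigr\|_{2}^{1/p}\bigl\|\,|\bar{B}|^{qr}\,\bigr\|_{2}^{1/q}
=\bigl\|\,|\mathcal{A}|^{pr}\,\bigr\|_{2}^{1/p}\bigl\|\,|\mathcal{B}|^{qr}\,\bigr\|_{2}^{1/q}.
\end{equation*}
The only subtle point, and therefore the main obstacle to pin down cleanly, is verifying the commutation of the tensor functional calculus (square root, power $|\cdot|^{s}$, and modulus of a product) with the Fourier block-diagonalization; once that identity is secured, the rest is just the matrix Hölder inequality plus the $n_3$-accounting dictated by Lemma \ref{relation_norm}.
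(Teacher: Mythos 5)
Your proposal is correct and follows the same route the paper uses throughout: block-diagonalize via the DFT, apply the matrix H\"older inequality of \cite[Theorem 4.29]{Zhan2002} (in its corollary form for arbitrary matrices) to $\bar{A}$ and $\bar{B}$, and convert the norms back with Lemma \ref{relation_norm}; the paper states this result without proof, but that is plainly the intended argument, and your observation that one cannot simply specialize $\mathcal{X}=\mathcal{I}$ in the preceding theorem (because of its positive-semidefiniteness hypothesis) is a fair point in your favor. The slice-wise application followed by the sequence H\"older recombination is a harmless detour --- one may equally apply the matrix inequality once to the whole block-diagonal matrices --- and your $n_3$-bookkeeping reproduces the stated constant.
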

The subsequent result is another tensor H{\" o}lder inequality.
\begin{theorem}
Let $1 \le p, q \le \infty$ with $1/p + 1/q= 1$. Then for all $\mathcal{A},\mathcal{B},\mathcal{C},\mathcal{D}\in\mathbb{R}^{n\times n\times n_3}$, we have
\begin{equation*}
2^{-|\frac{1}{p}-\frac{1}{2}|}\left\|\mathcal{C}^T*\mathcal{A}+\mathcal{D}^T*\mathcal{B}\right\|_F\le
n_3^{\frac{1}{2p}+\frac{1}{2q}-\frac{1}{2}}
\left\|\ |\mathcal{A}|^p+|\mathcal{B}^p|\ \right\|_F^{1/p}\cdot\left\|\ |\mathcal{C}|^q+|\mathcal{D}|^q\ \right\|^{1/q}_F,
\end{equation*}
and
\begin{equation*}
	2^{-|\frac{1}{p}-\frac{1}{2}|}\left\|\mathcal{C}^T*\mathcal{A}+\mathcal{D}^T*\mathcal{B}\right\|_2\le
	\left\|\ |\mathcal{A}|^p+|\mathcal{B}^p|\ \right\|_2^{1/p}\cdot\left\|\ |\mathcal{C}|^q+|\mathcal{D}|^q\ \right\|^{1/q}_2.
\end{equation*}
\end{theorem}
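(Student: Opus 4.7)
The plan is to mirror the Fourier-domain reduction used throughout this section: pass from $\mathcal{A},\mathcal{B},\mathcal{C},\mathcal{D}$ to the block-diagonal matrices $\bar{A},\bar{B},\bar{C},\bar{D}$ via (\ref{operator}), apply a known matrix H\"older inequality from \cite{Zhan2002} to these blocks, and then translate the result back using Lemma \ref{relation_norm}. The only new bookkeeping compared to the previous H\"older-type theorem is tracking the constant $2^{-|1/p-1/2|}$ and the powers $|\cdot|^p$, $|\cdot|^q$ through the reduction.

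First, I would use Lemma \ref{relation} together with the standard identity $\overline{\mathcal{A}^{\mathrm{T}}}=\bar{A}^{H}$ (valid for real tensors) to write $\overline{\mathcal{C}^{\mathrm{T}}*\mathcal{A}+\mathcal{D}^{\mathrm{T}}*\mathcal{B}}=\bar{C}^{H}\bar{A}+\bar{D}^{H}\bar{B}$. Because the tensor absolute value is defined by $|\mathcal{A}|=(\mathcal{A}^{\mathrm{T}}*\mathcal{A})^{1/2}$ and t-functions commute with the block-diagonalization, one also has $\overline{|\mathcal{A}|^p}=|\bar{A}|^p$ and similarly for $\mathcal{B},\mathcal{C},\mathcal{D}$. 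Next I would invoke the matrix analog of the asserted inequality in \cite{Zhan2002}, applied either block-by-block or directly to the whole block-diagonal matrix (the Frobenius and spectral norms both split across diagonal blocks), to obtain
\begin{equation*}
2^{-|\frac{1}{p}-\frac{1}{2}|}\,\|\bar{C}^{H}\bar{A}+\bar{D}^{H}\bar{B}\|_F \le \|\,|\bar{A}|^p+|\bar{B}|^p\,\|_F^{1/p}\cdot\|\,|\bar{C}|^q+|\bar{D}|^q\,\|_F^{1/q}.
\end{equation*}

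Finally, I would convert each Frobenius norm back via $\|\bar{X}\|_F=\sqrt{n_3}\,\|\mathcal{X}\|_F$: the left-hand side contributes a factor $\sqrt{n_3}$, while the right-hand side contributes $(\sqrt{n_3})^{1/p}(\sqrt{n_3})^{1/q}=\sqrt{n_3}$ since $1/p+1/q=1$. Rearranging, these combine to yield exactly the factor $n_3^{\frac{1}{2p}+\frac{1}{2q}-\frac{1}{2}}$ stated in the theorem (which here equals $1$, but matches the claimed form). For the spectral-norm inequality, Lemma \ref{relation_norm} gives $\|\mathcal{X}\|_2=\|\bar{X}\|_2$ with no prefactor, so no $n_3$ bookkeeping is required and the spectral version follows by the same reduction combined with the spectral-norm analog of the matrix H\"older inequality in \cite{Zhan2002}.

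The main obstacle is not analytical but citational: one must identify the correct matrix H\"older inequality in \cite{Zhan2002} that delivers the sharp prefactor $2^{-|1/p-1/2|}$ for general complex matrices, and verify that it passes cleanly to block-diagonal matrices (which it does, since both norms and the spectral functional calculus act blockwise). Once this matrix result is in hand, the tensor statement reduces to a short computation of exponents.
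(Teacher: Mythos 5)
Your proposal is correct and follows essentially the same route as the paper: block-diagonalize via the DFT, apply the matrix H\"older inequality of Zhan (the paper cites \cite[Theorem 4.34]{Zhan2002}), and convert back with Lemma \ref{relation_norm}, noting that the Frobenius prefactors cancel since $1/p+1/q=1$ and that the spectral norm needs no prefactor. Your exponent bookkeeping matches the paper's exactly.
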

\begin{proof}
We only consider the Frobenius norm case. It is straightforward to show that
\begin{equation*}
\begin{aligned}
&2^{-|\frac{1}{p}-\frac{1}{2}|}\left\|\mathcal{C}^T*\mathcal{A}+\mathcal{D}^T*\mathcal{B}\right\|_F
=\frac{1}{\sqrt{n_3}}2^{-|\frac{1}{p}-\frac{1}{2}|}\left\|\bar{C}^H\bar{A}+\bar{D}^H\bar{B}\right\|_F\\
\le&\frac{1}{\sqrt{n_3}}\left\|\ |\bar{A}|^p+|\bar{B}^p|\ \right\|_F^{1/p}\cdot\left\|\ |\bar{C}|^q+|\bar{D}|^q\ \right\|^{1/q}_F\\
=&n_3^{\frac{1}{2p}+\frac{1}{2q}-\frac{1}{2}}
\left\|\ |\mathcal{A}|^p+|\mathcal{B}^p|\ \right\|_F^{1/p}\cdot\left\|\ |\mathcal{C}|^q+|\mathcal{D}|^q\ \right\|^{1/q}_F,
\end{aligned}
\end{equation*}
where we utilize \cite[Theorem 4.34]{Zhan2002}. 
\end{proof}

\hskip 2em 
We continue in this section by discussing the tensor Minkowski inequality.
\begin{theorem}
Let $1\le p\le \infty$. For $ \mathcal{A}_i$ and $\mathcal{B}_i\in\mathbb{R}^{n_1\times n_1\times n_3}(i=1,2)$,
\begin{equation*}
	2^{-|\frac{1}{p}-\frac{1}{2}|}\left\|\ |\mathcal{A}_1+\mathcal{A}_2|^p+|\mathcal{B}_1+\mathcal{B}_2|^p\ \right\|^{1/p}\\
	\le\left\|\ |\mathcal{A}_1|^p+|\mathcal{B}_1|^p\ \right\|^{1/p}+\left\|\ |\mathcal{A}_2|^p+|\mathcal{B}_2|^p\ \right\|^{1/p},
\end{equation*}
holds for both the Frobenius norm and the spectral norm.
\end{theorem}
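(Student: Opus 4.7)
The plan is to reduce the inequality to the matrix Minkowski inequality by passing to the DFT domain, mirroring the strategy used in the preceding H\"older-type theorems. First I would apply the block diagonalization (\ref{operator}) to each of $\mathcal{A}_i$ and $\mathcal{B}_i$ ($i=1,2$), obtaining block diagonal matrices $\bar{A}_i,\bar{B}_i$. Since tensor addition, the t-product, and the t-function $|\cdot|^p$ all act blockwise in the DFT domain (the first two by Lemma \ref{relation}, the last because $|\mathcal{A}|^p=((\mathcal{A}^{\mathrm{T}}*\mathcal{A})^{1/2})^p$ is a tensor t-function and hence corresponds to $|\bar{A}|^p$ block-by-block), we have
\begin{equation*}
\overline{|\mathcal{A}_1+\mathcal{A}_2|^p+|\mathcal{B}_1+\mathcal{B}_2|^p} = |\bar{A}_1+\bar{A}_2|^p+|\bar{B}_1+\bar{B}_2|^p,
\end{equation*}
and analogously for each $|\mathcal{A}_i|^p+|\mathcal{B}_i|^p$.

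Next I would invoke the matrix Minkowski inequality from \cite{Zhan2002} applied to the block diagonal matrices $\bar{A}_i,\bar{B}_i$. Because both the Frobenius and spectral norms respect block diagonal structure and are unitarily invariant, the matrix inequality transfers directly to
\begin{equation*}
2^{-|\frac{1}{p}-\frac{1}{2}|}\bigl\|\,|\bar{A}_1+\bar{A}_2|^p+|\bar{B}_1+\bar{B}_2|^p\,\bigr\|^{1/p}
\le \bigl\|\,|\bar{A}_1|^p+|\bar{B}_1|^p\,\bigr\|^{1/p}+\bigl\|\,|\bar{A}_2|^p+|\bar{B}_2|^p\,\bigr\|^{1/p},
\end{equation*}
valid for $\|\cdot\|\in\{\|\cdot\|_F,\|\cdot\|_2\}$ on the matrix side.

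Finally I would translate the inequality back to tensor norms via Lemma \ref{relation_norm}. For the spectral norm the correspondence is an equality, so the tensor bound drops out at once. For the Frobenius norm each occurrence of $\|\cdot\|_F$ acquires a factor of $1/\sqrt{n_3}$ relative to its matrix counterpart, but since every norm on both sides is raised to the common exponent $1/p$, the resulting $n_3^{-1/(2p)}$ prefactor is identical on the two sides and cancels; this is why, in contrast with the H\"older inequalities proved earlier, no residual power of $n_3$ survives in the final statement. The main (and only mildly technical) obstacle I anticipate is justifying that the map $\mathcal{A}\mapsto|\mathcal{A}|^p$ commutes cleanly with the block diagonalization; once that identification is established, the inequality is a direct lift of the matrix result in \cite{Zhan2002}.
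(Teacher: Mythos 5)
Your proposal is correct and follows essentially the same route as the paper: block-diagonalize via the DFT, apply the matrix Minkowski inequality of \cite[Theorem 4.35]{Zhan2002} to the block diagonal matrices, and convert back using Lemma \ref{relation_norm}, with the common $n_3^{-1/(2p)}$ factor cancelling from both sides in the Frobenius case. Your explicit remark on why no residual power of $n_3$ survives (in contrast with the H\"older inequalities) matches the paper's computation exactly.
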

\begin{proof}
It follows from \cite[Theorem 4.35]{Zhan2002} that
\begin{equation*}
	\begin{aligned}
		&2^{-|\frac{1}{p}-\frac{1}{2}|}\|\ |\mathcal{A}_1+\mathcal{A}_2|^p+|\mathcal{B}_1+\mathcal{B}_2|^p\ \|_F^{1/p}\\
		=&n_3^{-\frac{1}{2p}}2^{-|\frac{1}{p}-\frac{1}{2}|}\|\ |\bar{A}_1+\bar{A}_2|^p+|\bar{B}_1+\bar{B}_2|^p\ \|_F^{1/p}\\
		\le&n_3^{-\frac{1}{2p}}\|\ |\bar{A}_1|^p+|\bar{B}_1|^p\ \|_F^{1/p}     +          \|\ |\bar{A}_2|^p+|\bar{B}_2|^p\ \|_F^{1/p}\\
		=&\|\ |\mathcal{A}_1|^p+|\mathcal{B}_1|^p\ \|_F^{1/p}      +         \|\ |\mathcal{A}_2|^p+|\mathcal{B}_2|^p\ \|_F^{1/p}.
	\end{aligned}
\end{equation*}
The spectral norm inequality arises analogously.
\end{proof}
\section{Tensor t-eigenvalue inequalities}

\hskip 2em Let $A$ be an $n\times n$ matrix, with eigenvalues $\lambda_1,\ldots, \lambda_n$. The Schur inequality\cite{Schur1994} says that
\begin{equation}\label{schur_1}
	\sum_{i=1}^n|\lambda_i|^2\le\|A\|_F^2.
\end{equation}
Based on the concepts of the tensor t-eigenvalue and Lemma \ref{relation_norm}, we can easily extend (\ref{schur_1}) to tensors.
\begin{theorem}
	For $ \mathcal{A}\in\mathbb{R}^{n\times n\times n_3}$ with t-eigenvalues $\lambda_i,\ i=1,2,\ldots,nn_3$, we have
	\begin{equation*}
		\sum_{i=1}^{nn_3}|\lambda_i|^2\le n_3\|\mathcal{A}\|_F^2.
	\end{equation*}
\end{theorem}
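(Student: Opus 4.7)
The plan is to reduce the t-eigenvalue statement to the classical Schur inequality applied to a block diagonal matrix, exploiting the fact already recorded in the excerpt: the t-eigenvalues of $\mathcal{A}$ coincide with the eigenvalues of $\texttt{bcirc}(\mathcal{A})$, which by (\ref{operator}) is similar to the block diagonal matrix $\bar{A}=\texttt{diag}(\bar{A}^{(1)},\ldots,\bar{A}^{(n_3)})$. Consequently, the list of $nn_3$ t-eigenvalues $\{\lambda_i\}$ is precisely the disjoint union of the eigenvalues of the blocks $\bar{A}^{(1)},\ldots,\bar{A}^{(n_3)}$.

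First I would invoke the classical Schur inequality on each diagonal block separately: for every $k\in\{1,\ldots,n_3\}$,
\begin{equation*}
\sum_{\lambda\in\sigma(\bar{A}^{(k)})}|\lambda|^2\le\|\bar{A}^{(k)}\|_F^2.
\end{equation*}
Summing these $n_3$ inequalities and using that the spectra partition the t-spectrum of $\mathcal{A}$, I obtain
\begin{equation*}
\sum_{i=1}^{nn_3}|\lambda_i|^2\le\sum_{k=1}^{n_3}\|\bar{A}^{(k)}\|_F^2=\|\bar{A}\|_F^2.
\end{equation*}

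Finally, I would apply Lemma \ref{relation_norm}, which gives $\|\bar{A}\|_F^2=n_3\|\mathcal{A}\|_F^2$, closing the chain of inequalities to the desired bound $\sum_i|\lambda_i|^2\le n_3\|\mathcal{A}\|_F^2$. There is no genuine obstacle here, since both ingredients (Schur's inequality on each Fourier block and the Frobenius-norm identity between $\mathcal{A}$ and $\bar{A}$) are already in hand; the only point worth stating carefully is the book-keeping that the total number of t-eigenvalues is $nn_3$, matching the total multiplicity across the $n_3$ blocks of size $n\times n$.
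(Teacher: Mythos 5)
Your proof is correct and follows essentially the same route the paper indicates: identify the t-eigenvalues with the eigenvalues of $\mathtt{bcirc}(\mathcal{A})$ (equivalently, of the similar block-diagonal matrix $\bar{A}$), apply the classical Schur inequality there, and convert $\|\bar{A}\|_F^2$ to $n_3\|\mathcal{A}\|_F^2$ via Lemma \ref{relation_norm}. Applying Schur block by block and summing is just an unpacked version of applying it to $\bar{A}$ directly, so there is no substantive difference.
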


\hskip 2em Suppose $A$ is an $n\times n$ matrix. The Gershgorin circle theorem makes the observation:  discs $G(A)=\bigcup\limits_{i=1}^n\{z\in\mathbb{C}:|z-a_{ii}|\le\sum\limits_{j\neq i}|a_{ij}|\}$ that are centered at the points $a_{ii}$ are guaranteed to contain the eigenvalues of $A$. The upcoming theorem shows that we still have similar results in tensors.
\begin{theorem}
Let $\mathcal{A}=[a_{ijk}]\in\mathbb{C}^{n\times n\times n_3}$ and denote the $n$ Gershgorin discs
\begin{equation*}
G_i(\mathcal{A})=\left\{z\in\mathbb{C}:|z-a_{ii1}|\le\sum\limits_{j,k\neq i}|a_{ijk}|\right\}.
\end{equation*}
Then the t-eigenvalues of $\mathcal{A}$ are in the union of Gershgorin discs
\begin{equation*}
	G(\mathcal{A})=\bigcup\limits_{i=1}^nG_i(\mathcal{A}).
\end{equation*}
Furthermore, if the union of $k$ of the $n$ discs that comprise $G(\mathcal{A})$ forms a set that is disjoint from the remaining $n-k$ discs,
then this set contains exactly $k$ t-eigenvalues of $\mathcal{A}$, counted according to their algebraic multiplicities.
\begin{proof}
Suppose that $\mathtt{bcirc}(\mathcal{A})=[b_{ij}]_{nn_3\times nn_3}$. Since all the t-eigenvalues of $\mathcal{A}$ can be regarded as those of matrix $\mathtt{bcirc}(\mathcal{A})$, they are contained in
\begin{equation*}
G_i(\mathtt{bcirc}(\mathcal{A}))=\left\{z\in\mathbb{C}:|z-b_{ii}|\le\sum\limits_{j\neq i}|b_{ij}|\right\},\ i=1,2,\ldots,nn_3,
\end{equation*}
which can also be characterized below by noticing the block circulant structure of $\mathtt{bcirc}(\mathcal{A}),$
\begin{equation*}
G_i(\mathtt{bcirc}(\mathcal{A}))=\left\{z\in\mathbb{C}:|z-a_{ii1}|\le\sum\limits_{j,k\neq i}|a_{ijk}|\right\}=G_i(\mathcal{A}),\ i=1,2,\ldots n.
\end{equation*}
The conclusion follows directly by the matrix Gershgorin circle theorem.
\end{proof}
\end{theorem}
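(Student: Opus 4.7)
The plan is to reduce the tensor Gershgorin statement to the classical matrix Gershgorin circle theorem applied to $\mathtt{bcirc}(\mathcal{A})$, exploiting the observation (already noted after the definition of t-eigenvalue in Section 2) that the t-eigenvalues of $\mathcal{A}$ coincide exactly with the eigenvalues of the $nn_3\times nn_3$ matrix $\mathtt{bcirc}(\mathcal{A})$.

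First, I would let $\mathtt{bcirc}(\mathcal{A})=[b_{ij}]$ and write the classical Gershgorin discs $G_i(\mathtt{bcirc}(\mathcal{A}))=\{z\in\mathbb{C}:|z-b_{ii}|\le\sum_{j\neq i}|b_{ij}|\}$ for $i=1,\ldots,nn_3$. The key computation is to read off these data from the block circulant structure. Since the diagonal blocks of $\mathtt{bcirc}(\mathcal{A})$ are all copies of $A^{(1)}$, the $(i,i)$ entry (for the $i$-th row in the first block row) equals $a_{ii1}$, and more generally every row of $\mathtt{bcirc}(\mathcal{A})$ has its diagonal entry taken from $A^{(1)}$. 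Next, because each block row is a cyclic permutation of the sequence $(A^{(1)},A^{(2)},\ldots,A^{(n_3)})$, the multiset of absolute values of entries in any row corresponding to index $i\in\{1,\ldots,n\}$ is the same, namely $\{|a_{ijk}|:1\le j\le n,\;1\le k\le n_3\}$. Thus, excluding the diagonal entry $a_{ii1}$, the off-diagonal row sum equals $\sum_{(j,k)\neq(i,1)}|a_{ijk}|$, which is precisely the radius appearing in $G_i(\mathcal{A})$.

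Consequently, the $nn_3$ matrix discs collapse into only $n$ distinct tensor discs, each $G_i(\mathcal{A})$ appearing with multiplicity $n_3$. Invoking the matrix Gershgorin theorem then immediately gives that every t-eigenvalue of $\mathcal{A}$ lies in $\bigcup_{i=1}^{n}G_i(\mathcal{A})=G(\mathcal{A})$. For the second assertion, I would apply the refined matrix Gershgorin statement: if $k$ of the $n$ tensor discs form a component disjoint from the other $n-k$, then the corresponding $kn_3$ matrix discs form a component disjoint from the remaining $(n-k)n_3$ matrix discs, hence contain exactly $kn_3$ eigenvalues of $\mathtt{bcirc}(\mathcal{A})$; grouping these by the $n_3$-fold repetition built into the block structure yields the stated count of t-eigenvalues.

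The only real obstacle is bookkeeping around the block circulant row/column indexing: one must verify carefully that (i) each row of $\mathtt{bcirc}(\mathcal{A})$ whose diagonal lies in position $i$ (mod $n$) produces exactly the disc $G_i(\mathcal{A})$, and (ii) the multiplicity-$n_3$ collapse is compatible with the disjointness hypothesis in the second part. Once this indexing is set up, both conclusions are immediate consequences of the matrix theorem, and no further analytic work is needed.
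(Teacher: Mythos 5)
Your reduction is the same one the paper uses: identify the t-eigenvalues with the eigenvalues of $\mathtt{bcirc}(\mathcal{A})$, note that every diagonal entry of $\mathtt{bcirc}(\mathcal{A})$ is drawn from $A^{(1)}$ while each row is a rearrangement of the $i$-th rows of all $n_3$ frontal slices, and conclude that the $nn_3$ matrix Gershgorin discs collapse to the $n$ tensor discs $G_i(\mathcal{A})$, each occurring $n_3$ times. Your handling of the first assertion is correct, and in fact more careful than the paper's about the radius: the off-diagonal row sum is $\sum_{(j,k)\neq(i,1)}|a_{ijk}|$, which is how the paper's $\sum_{j,k\neq i}$ must be read.

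The gap is in your final step. The matrix theorem places exactly $kn_3$ eigenvalues of $\mathtt{bcirc}(\mathcal{A})$ in the isolated union, and you pass from $kn_3$ to $k$ by asserting an ``$n_3$-fold repetition built into the block structure.'' That is false: $\mathtt{bcirc}(\mathcal{A})$ is similar to $\mathtt{diag}(\bar{A}^{(1)},\ldots,\bar{A}^{(n_3)})$ with $n_3$ \emph{distinct} diagonal blocks, so its spectrum is not $n_3$-fold degenerate in general; already for $n=1$, $n_3=2$ the matrix $\left[\begin{smallmatrix}a&b\\ b&a\end{smallmatrix}\right]$ has the two distinct eigenvalues $a\pm b$. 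Since the paper counts $nn_3$ t-eigenvalues in total (all eigenvalues of $\mathtt{bcirc}(\mathcal{A})$), the honest conclusion of this argument is that the isolated set contains exactly $kn_3$ t-eigenvalues, not $k$. The paper's own proof silently elides this point, so its stated count of $k$ appears to be off by the factor $n_3$; your proposal surfaces the discrepancy but then covers it with an invalid repetition claim. You should either state the count as $kn_3$ or supply a genuine argument (none is available from the block-circulant structure alone) for the $k$ in the statement.
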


\hskip 2em The next theorem is the tensor Bauer-Fike theorem.
\begin{theorem}
Let $\mathcal{A},\mathcal{B}\in\mathbb{C}^{n\times n\times n_3}$ be diagonalizable, and suppose that $\mathcal{A}=\mathcal{Q}^{-1}*\mathcal{S}*\mathcal{Q}$, in which $\mathcal{S}$ is $\mathrm{f}$-diagonal and $\mathcal{Q}$ is invertible. If $\lambda$ is a t-eigenvalue of $\mathcal{A}$, there always exists a $\mu$ being a t-eigenvalue of $\mathcal{B}$, such that
\begin{equation*}
|\lambda-\mu|\le\left\|\mathcal{Q}^{-1}\right\|_2\cdot \left\|\mathcal{Q}\right\|_2\cdot\left\|\mathcal{A-B}\right\|_2.
\end{equation*}
\end{theorem}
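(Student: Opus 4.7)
The plan is to transfer the statement into the DFT block-diagonal picture and then invoke the classical matrix Bauer-Fike theorem block by block. Setting $\bar{A},\bar{B},\bar{Q},\bar{S}$ as in (\ref{operator}), Lemma \ref{relation} converts the hypothesis $\mathcal{A}=\mathcal{Q}^{-1}*\mathcal{S}*\mathcal{Q}$ into $\bar{A}=\bar{Q}^{-1}\bar{S}\bar{Q}$, which reads block by block as $\bar{A}^{(i)}=(\bar{Q}^{(i)})^{-1}\bar{S}^{(i)}\bar{Q}^{(i)}$ for every $i=1,\dots,n_3$. The key preparatory step is to check that each $\bar{S}^{(i)}$ is still a genuinely diagonal matrix: the transform $\bar{\mathcal{S}}=\mathtt{fft}(\mathcal{S},[\ ],3)$ acts tube by tube, so if $\mathcal{S}$ is $\mathrm{f}$-diagonal (only the tubes $\mathcal{S}(j,j,:)$ are nonzero) then the same is true of $\bar{\mathcal{S}}$. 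Hence the block identity above is an honest eigendecomposition of $\bar{A}^{(i)}$.

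Next I would pick a t-eigenvalue $\lambda$ of $\mathcal{A}$. As recalled in the excerpt, the t-eigenvalues of $\mathcal{A}$ are the eigenvalues of $\mathtt{bcirc}(\mathcal{A})$, and (\ref{operator}) identifies these with $\bigcup_{i}\sigma(\bar{A}^{(i)})$. So $\lambda\in\sigma(\bar{A}^{(i_0)})$ for some index $i_0$. Applying the classical matrix Bauer-Fike theorem to the diagonalization $\bar{A}^{(i_0)}=(\bar{Q}^{(i_0)})^{-1}\bar{S}^{(i_0)}\bar{Q}^{(i_0)}$ and the perturbation $\bar{B}^{(i_0)}$ produces some $\mu\in\sigma(\bar{B}^{(i_0)})\subseteq\sigma(\bar{B})$, hence a t-eigenvalue of $\mathcal{B}$, for which
\[
|\lambda-\mu|\le \|(\bar{Q}^{(i_0)})^{-1}\|_2\,\|\bar{Q}^{(i_0)}\|_2\,\|\bar{A}^{(i_0)}-\bar{B}^{(i_0)}\|_2.
\]

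To finish, I would exploit the block-diagonal structure of $\bar{Q}$, $\bar{Q}^{-1}$ and $\bar{A}-\bar{B}$ together with Lemma \ref{relation_norm} to upgrade each factor on the right into the corresponding tensor spectral norm: $\|\bar{Q}^{(i_0)}\|_2\le\|\bar{Q}\|_2=\|\mathcal{Q}\|_2$, and analogously $\|(\bar{Q}^{(i_0)})^{-1}\|_2\le\|\mathcal{Q}^{-1}\|_2$ together with $\|\bar{A}^{(i_0)}-\bar{B}^{(i_0)}\|_2\le\|\bar{A}-\bar{B}\|_2=\|\mathcal{A}-\mathcal{B}\|_2$. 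Chaining these inequalities yields the claimed bound. The only genuinely non-mechanical point in the whole argument is the preservation of $\mathrm{f}$-diagonality under the tube-wise FFT, which is what guarantees the hypothesis of the matrix Bauer-Fike theorem is actually available block by block; once that observation is in place, the remainder is a clean blockwise translation.
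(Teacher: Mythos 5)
Your argument is correct and rests on the same key tool as the paper's proof --- the classical matrix Bauer--Fike theorem --- but you deploy it at a different level of the unfolding: the paper applies it once to the full block circulant matrices, writing $\mathtt{bcirc}(\mathcal{A})=\mathtt{bcirc}(\mathcal{Q})^{-1}\,\mathtt{bcirc}(\mathcal{S})\,\mathtt{bcirc}(\mathcal{Q})$, asserting that $\mathtt{bcirc}(\mathcal{S})$ is diagonal, and concluding via $\|\mathtt{bcirc}(\cdot)\|_2=\|\cdot\|_2$, whereas you pass to the DFT block-diagonal picture and apply Bauer--Fike only on the block $\bar A^{(i_0)}$ carrying $\lambda$. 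Your route buys a genuine gain in rigor: $\mathtt{bcirc}(\mathcal{S})$ is in fact only block circulant with diagonal blocks, not diagonal, so the paper's similarity is not literally an eigendecomposition as written; it becomes one only after a further conjugation by $F_{n_3}\otimes I_n$ (harmless to the condition number, since $F_{n_3}/\sqrt{n_3}$ is unitary). Your observation that the tube-wise FFT preserves $\mathrm{f}$-diagonality, so that each $\bar S^{(i)}$ is honestly diagonal, supplies exactly the justification the paper skips, and the relaxations $\|\bar Q^{(i_0)}\|_2\le\|\bar Q\|_2=\|\mathcal{Q}\|_2$, $\|(\bar Q^{(i_0)})^{-1}\|_2\le\|\mathcal{Q}^{-1}\|_2$ and $\|\bar A^{(i_0)}-\bar B^{(i_0)}\|_2\le\|\mathcal{A}-\mathcal{B}\|_2$ are all valid for block diagonal matrices, so the chain closes (and even yields a slightly sharper single-block bound before relaxation). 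One caveat you share with the paper: the standard Bauer--Fike theorem starts from an eigenvalue of the \emph{perturbed} matrix and locates a nearby eigenvalue of the \emph{diagonalizable} one, whereas the theorem here starts from $\lambda\in\sigma(\mathcal{A})$ and seeks $\mu\in\sigma(\mathcal{B})$; this reversed direction does not follow verbatim from the usual statement and needs an extra word (e.g., a homotopy/counting argument or the diagonalizability of $\mathcal{B}$ with its own condition number). Since the paper's proof makes the identical reversed appeal, this is a defect of the theorem as stated rather than of your proposal.
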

\begin{proof}
From $\mathcal{A}=\mathcal{Q}^{-1}*\mathcal{S}*\mathcal{Q}$, we have
\begin{equation*}
\mathtt{bcirc}(\mathcal{A})=\mathtt{bcirc}(\mathcal{Q}^{-1}*\mathcal{S}*\mathcal{Q})=(\mathtt{bcirc}(\mathcal{Q}))^{-1}\cdot\mathtt{bcirc}(\mathcal{S})\cdot\mathtt{bcirc}(\mathcal{Q}),
\end{equation*}
where $\mathtt{bcirc}(\mathcal{S})$ is a diagonal matrix. According to the Bauer-Fike theorem for matrices, for any eigenvalue $\lambda$ of $\mathtt{bcirc}(\mathcal{A})$, there exists an eigenvalue $\mu$ of $\mathtt{bcirc}(\mathcal{B})$, such that
\begin{equation*}
\begin{aligned}
&|\lambda-\mu|\le\left\|\mathtt{bcirc}(\mathcal{Q})^{-1}\right\|_2\cdot \left\|\mathtt{bcirc}(\mathcal{Q})\right\|_2\cdot\left\|\mathtt{bcirc}(\mathcal{A-B})\right\|_2\\=&\|\mathcal{Q}^{-1}\|_2\cdot\left\|\mathcal{Q}\right\|_2\cdot\left\|\mathcal{A-B}\right\|_2,
\end{aligned}
\end{equation*}
completing the proof of the theorem.
\end{proof}

\hskip 2em Next we derive the tensor Hoffman-Wielandt theorem.
\begin{theorem}
Let $\mathcal{A}$ and $\mathcal{B}\in\mathbb{C}^{n\times n\times n_3}$ be both normal, with $\lambda_1,\ldots,\lambda_{nn_3}$ being the t-eigenvalues of $\mathcal{A}$, and assume that $\mu_1,\ldots,\mu_{nn_3}$ are the t-eigenvalues of $\mathcal{B}$. There is a permutation $\pi(\cdot)$ of the integers $1,\ldots,nn_3$ such that
\begin{equation*}
\left(\sum\limits_{i=1}^{nn_3}|\mu_{\pi(i)}-\lambda_i|^2\right)^{\frac{1}{2}}\le
{n_3}\left\|\mathcal{B}-\mathcal{A}\right\|_F.
\end{equation*}
\end{theorem}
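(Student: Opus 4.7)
The plan is to lift the statement to a matrix inequality via the block-circulant embedding and then invoke the classical Hoffman--Wielandt theorem for normal matrices. The key observation, already used repeatedly in this section, is that the $nn_3$ t-eigenvalues of $\mathcal{A}$ are precisely the ordinary eigenvalues of $\mathtt{bcirc}(\mathcal{A})$, and analogously for $\mathcal{B}$. The desired inequality should therefore follow by applying a matrix Hoffman--Wielandt estimate to the pair $(\mathtt{bcirc}(\mathcal{A}),\mathtt{bcirc}(\mathcal{B}))$ and converting the resulting matrix Frobenius norm back to the tensor one.

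First I would verify that $\mathtt{bcirc}(\mathcal{A})$ and $\mathtt{bcirc}(\mathcal{B})$ are normal in the standard matrix sense. Combining the multiplicativity $\mathtt{bcirc}(\mathcal{X}*\mathcal{Y})=\mathtt{bcirc}(\mathcal{X})\cdot\mathtt{bcirc}(\mathcal{Y})$ (already used in the Bauer--Fike proof above) with the identity $\mathtt{bcirc}(\mathcal{A}^T)=\mathtt{bcirc}(\mathcal{A})^{*}$, the hypothesis $\mathcal{A}^T*\mathcal{A}=\mathcal{A}*\mathcal{A}^T$ propagates to $\mathtt{bcirc}(\mathcal{A})^{*}\,\mathtt{bcirc}(\mathcal{A})=\mathtt{bcirc}(\mathcal{A})\,\mathtt{bcirc}(\mathcal{A})^{*}$, and the same for $\mathcal{B}$. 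The matrix Hoffman--Wielandt theorem then supplies a permutation $\pi$ of $\{1,\ldots,nn_3\}$ such that
\begin{equation*}
\left(\sum_{i=1}^{nn_3}|\mu_{\pi(i)}-\lambda_i|^2\right)^{1/2}\le \|\mathtt{bcirc}(\mathcal{B})-\mathtt{bcirc}(\mathcal{A})\|_F=\|\mathtt{bcirc}(\mathcal{B}-\mathcal{A})\|_F.
\end{equation*}

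To close the argument I would translate the right-hand side back to the tensor Frobenius norm. A direct entry count gives $\|\mathtt{bcirc}(\mathcal{C})\|_F^2=n_3\|\mathcal{C}\|_F^2$, since every frontal slice of $\mathcal{C}$ appears exactly once in each of the $n_3$ block rows; this is also consistent with Lemma~\ref{relation_norm} upon rescaling $F_{n_3}$ to a unitary matrix. Consequently the previous bound becomes $\sqrt{n_3}\,\|\mathcal{B}-\mathcal{A}\|_F\le n_3\|\mathcal{B}-\mathcal{A}\|_F$, which is the claimed inequality (with some slack). The main delicate point is the identity $\mathtt{bcirc}(\mathcal{A}^T)=\mathtt{bcirc}(\mathcal{A})^{*}$, needed to promote tensor normality (stated through $^T$) to matrix normality (stated through the conjugate transpose): in the complex case this relies on the correct conjugation convention in the tensor transpose, and if that is awkward one can instead argue blockwise in the Fourier domain by applying matrix Hoffman--Wielandt to each pair $(\bar{A}^{(i)},\bar{B}^{(i)})$ and aggregating the squared inequalities via Lemma~\ref{relation_norm}.
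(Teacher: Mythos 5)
Your proof follows the same route as the paper: lift the statement to $\mathtt{bcirc}(\mathcal{A})$ and $\mathtt{bcirc}(\mathcal{B})$, apply the matrix Hoffman--Wielandt theorem, and convert the Frobenius norm back to the tensor one. In fact your norm conversion $\|\mathtt{bcirc}(\mathcal{C})\|_F=\sqrt{n_3}\,\|\mathcal{C}\|_F$ is the correct one --- the paper asserts the equality $\|\mathtt{bcirc}(\mathcal{B}-\mathcal{A})\|_F=n_3\|\mathcal{B}-\mathcal{A}\|_F$, which is off by a factor of $\sqrt{n_3}$ --- so your remark that the stated bound holds only with slack (and could be sharpened to $\sqrt{n_3}\,\|\mathcal{B}-\mathcal{A}\|_F$) is a genuine improvement, as is your explicit justification of the normality of the block-circulant matrices, which the paper leaves as ``clear.''
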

$Proof.$
It is clear that $\mathtt{bcirc}(\mathcal{A})$ and $\mathtt{bcirc}(\mathcal{B})$ are both normal matrices. Consequently, there is a permutation $\pi(\cdot)$ of the integers $1,\ldots,nn_3$ such that
\begin{equation*}
	\left(\sum\limits_{i=1}^{nn_3}|\mu_{\pi(i)}-\lambda_i|^2\right)^{\frac{1}{2}}\le\left\|\mathtt{bcirc}(\mathcal{B})-\mathtt{bcirc}(\mathcal{A})\right\|_F
\end{equation*}
\begin{equation*}
=\left\|\mathtt{bcirc}(\mathcal{B}-\mathcal{A})\right\|_F=
{n_3}\left\|\mathcal{B}-\mathcal{A}\right\|_F. \xqedhere{118.5pt}{\qed}
\end{equation*}

The tensor Hoffman-Wielandt theorem leads to the following corollary.
\begin{Corollary}
Soppose that $\mathcal{A}$ and $\mathcal{B}\in\mathbb{R}^{n\times n\times n_3}$ are symmetric, where $\lambda_1\le\ldots\le\lambda_{nn_3}$ are the t-eigenvalues of $\mathcal{A}$, and $\mu_1\le\ldots\le\mu_{nn_3}$ are the t-eigenvalues of $\mathcal{B}$. Then
\begin{equation*}
\left(\sum\limits_{i=1}^{nn_3}|\mu_{i}-\lambda_i|^2\right)^{\frac{1}{2}}\le{{n_3}}\left\|\mathcal{B}-\mathcal{A}\right\|_F.
\end{equation*}
\end{Corollary}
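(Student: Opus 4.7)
The plan is to follow the same template as the proof of the preceding tensor Hoffman-Wielandt theorem, but replace the general normal-matrix Hoffman-Wielandt bound with its sharper Hermitian form, in which the sorted pairing of eigenvalues is known to be optimal, so that no non-trivial permutation $\pi$ is needed.

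First I would observe that since $\mathcal{A}$ and $\mathcal{B}$ are symmetric, each block $\bar{A}^{(i)}$ and $\bar{B}^{(i)}$ is Hermitian (by the same block-diagonalization argument used for Lemma \ref{relation_definite}, but without a positive semidefiniteness assumption). Consequently $\bar{A}$ and $\bar{B}$ are Hermitian block diagonal matrices, and via the unitary similarity in (\ref{operator}), $\texttt{bcirc}(\mathcal{A})$ and $\texttt{bcirc}(\mathcal{B})$ are unitarily similar to Hermitian matrices. In particular the t-eigenvalues of $\mathcal{A}$ and $\mathcal{B}$ are real, are precisely the eigenvalues of $\texttt{bcirc}(\mathcal{A})$ and $\texttt{bcirc}(\mathcal{B})$, and admit the sorted listings $\lambda_1\le\cdots\le\lambda_{nn_3}$ and $\mu_1\le\cdots\le\mu_{nn_3}$ given in the statement.

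Next I would invoke the classical Hoffman-Wielandt inequality in its Hermitian form: if $X,Y$ are Hermitian matrices of the same size with eigenvalues sorted in increasing order as $\sigma_1(X)\le\cdots\le\sigma_N(X)$ and $\sigma_1(Y)\le\cdots\le\sigma_N(Y)$, then $\sum_{i=1}^{N}|\sigma_i(Y)-\sigma_i(X)|^2\le \|Y-X\|_F^2$, i.e.\ the identity permutation already attains the bound. Applying this to $\texttt{bcirc}(\mathcal{A})$ and $\texttt{bcirc}(\mathcal{B})$ (equivalently, to the Hermitian matrices $\bar{A}$ and $\bar{B}$) eliminates the permutation $\pi$ present in the general tensor Hoffman-Wielandt theorem. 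Translating back by $\|\texttt{bcirc}(\mathcal{B}-\mathcal{A})\|_F = n_3\|\mathcal{B}-\mathcal{A}\|_F$, exactly as in the proof just above, yields the claimed inequality. The only real obstacle is to recall (or quote) the Hermitian form of the matrix Hoffman-Wielandt inequality; no tensor-specific machinery beyond what has already been developed is required.
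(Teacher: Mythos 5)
Your proof is correct and takes essentially the same route the paper intends: unfold to $\mathtt{bcirc}$, apply the matrix Hoffman--Wielandt inequality, and translate the Frobenius norms back. You usefully make explicit the one point the paper leaves implicit, namely that for the Hermitian (here real symmetric) matrices $\mathtt{bcirc}(\mathcal{A})$ and $\mathtt{bcirc}(\mathcal{B})$ the sharper sorted-eigenvalue form of Hoffman--Wielandt applies, so the permutation $\pi$ of the general normal case can be taken to be the identity.
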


\hskip 2em Similar treatment yields the last theorem.
\begin{theorem}
Let $\mathcal{T}=\mathcal{A} + \mathtt i\mathcal{B}\in\mathbb{C}^{n\times n\times n_3}$ with real symmetric tensors $\mathcal{A}$ and $\mathcal{B}$. Let $\alpha_i$ and $\beta_i$ be the t-eigenvalues of $\mathcal{A}$ and $\mathcal{B}$ respectively, which are ordered such that $|\alpha_1|\ge|\alpha_2|\ge|\alpha_{nn_3}|$ and $|\beta_1|\ge|\beta_2|\ge|\beta_{nn_3}|$. Then
\begin{equation*}
\frac{1}{n_3}\left\|\mathtt{diag}(\alpha_1+ \mathtt i\beta_1,\ldots,\alpha_{nn_3} + \mathtt i\beta_{nn_3})\right\|_F\le\sqrt{2}\left\|\mathcal{T}\right\|_F
\end{equation*}
and
\begin{equation*}
	\left\|\mathtt{diag}(\alpha_1+\mathtt i\beta_1,\ldots,\alpha_{nn_3}+\mathtt i\beta_{nn_3})\right\|\le\sqrt{2}\left\|_2\mathcal{T}\right\|_2.
\end{equation*}
\end{theorem}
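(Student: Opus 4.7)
The plan is to follow the same pipeline used throughout Section~5: transfer the problem to the block-diagonalized tensor $\bar{T}$ via the DFT, prove the analogous matrix inequality on $\bar{T}$, and translate back with Lemma~\ref{relation_norm}. Since $\mathcal{A}$ and $\mathcal{B}$ are real symmetric, each diagonal block $\bar{A}^{(i)}$ and $\bar{B}^{(i)}$ is Hermitian by the machinery behind Lemma~\ref{relation_definite}, so $\bar{T}=\bar{A}+\mathtt{i}\bar{B}$ is in Hermitian-plus-$\mathtt{i}$-Hermitian form. By the remark following the definition of t-eigenvalues and by Lemma~\ref{relation}, the t-eigenvalues of $\mathcal{A}$ coincide with the spectrum of $\bar{A}$, so $\{\alpha_j\}_{j=1}^{nn_3}$ is simply the eigenvalue list of $\bar{A}$ ordered by modulus, and analogously for $\{\beta_j\}$.

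First I would establish the matrix inequality directly on $\bar{T}$. For the Frobenius norm, the identity $\|\bar{T}\|_F^{2}=\|\bar{A}\|_F^{2}+\|\bar{B}\|_F^{2}$ is immediate (the cross terms vanish by cyclicity of trace since $\bar{A},\bar{B}$ are Hermitian), and combined with $\|\bar{A}\|_F^{2}=\sum_j\alpha_j^{2}$, $\|\bar{B}\|_F^{2}=\sum_j\beta_j^{2}$ it yields $\|\texttt{diag}(\alpha_j+\mathtt{i}\beta_j)\|_F=\|\bar{T}\|_F$. For the spectral norm, writing $\bar{A}=(\bar{T}+\bar{T}^{H})/2$ and $\bar{B}=(\bar{T}-\bar{T}^{H})/(2\mathtt{i})$ gives $\|\bar{A}\|_2,\|\bar{B}\|_2\le\|\bar{T}\|_2$; since the prescribed ordering makes $|\alpha_1|$ and $|\beta_1|$ maximal simultaneously, $\|\texttt{diag}(\alpha_j+\mathtt{i}\beta_j)\|_2=\sqrt{\alpha_1^{2}+\beta_1^{2}}\le\sqrt{\|\bar{A}\|_2^{2}+\|\bar{B}\|_2^{2}}\le\sqrt{2}\,\|\bar{T}\|_2$.

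The final step is routine: by Lemma~\ref{relation_norm}, $\|\mathcal{T}\|_2=\|\bar{T}\|_2$ converts the spectral bound verbatim, while $\|\bar{T}\|_F=\sqrt{n_3}\,\|\mathcal{T}\|_F$ gives $\tfrac{1}{n_3}\|\texttt{diag}(\alpha_j+\mathtt{i}\beta_j)\|_F=\tfrac{1}{\sqrt{n_3}}\,\|\mathcal{T}\|_F\le\sqrt{2}\,\|\mathcal{T}\|_F$, which holds for all $n_3\ge1$. Notice in fact that the Frobenius side is an equality at the matrix level, so the factor $\sqrt{2}$ on the tensor inequality is slack, inherited solely to match the spectral statement.

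The only delicate point I anticipate is the eigenvalue ordering: the pair $(\alpha_j,\beta_j)$ is built by ranking the two spectra independently by modulus, and the $\alpha_j$'s and $\beta_j$'s come from different Hermitian blocks of $\bar{T}$, so this pairing does not respect the block structure. What rescues the spectral argument is precisely that $\max_j\sqrt{\alpha_j^{2}+\beta_j^{2}}$ is attained at $j=1$ because both sequences are top-ranked at $j=1$; the Frobenius argument, being a sum over $j$, is insensitive to the pairing. I expect this bookkeeping around the ordering to be the main thing to justify carefully in the writeup, but beyond it the proof is a direct reduction to the matrix case via the tools already assembled.
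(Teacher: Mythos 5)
Your proof is correct and follows the same reduction the paper intends by ``similar treatment'': block-diagonalize via the DFT, verify the inequality for $\bar{T}=\bar{A}+\mathtt{i}\bar{B}$ using that the $\alpha_j$ and $\beta_j$ are the eigenvalues of the Hermitian matrices $\bar{A}$ and $\bar{B}$, and convert back with Lemma~\ref{relation_norm}. The only difference is that you prove the matrix-level inequality from scratch (trace identity for the Frobenius case, $\bar{A}=(\bar{T}+\bar{T}^{H})/2$ for the spectral case, with the modulus-decreasing ordering ensuring the maximum of $\alpha_j^{2}+\beta_j^{2}$ occurs at $j=1$) rather than citing the known result for $T=A+\mathtt{i}B$; this is welcome since the paper omits the argument entirely, and your Frobenius computation in fact shows the stated $\tfrac{1}{n_3}$, $\sqrt{2}$ bound is far from sharp.
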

\section{Conclusion}
\hskip 2em We have provided some inequalities for a variety of tensor topics.
Most of the results and proofs presented here are derived through the technique of unfolding tensors into block circulant matrices.
Several directions can be pursued to expand the results throughout this paper.
\bibliographystyle{siam}
\bibliography{TensorInequality}

\begin{thebibliography}{10}

\bibitem{Ando1995}
{\sc T.~Ando}, {\em Matrix {Y}oung inequalities}, in Operator theory in
  function spaces and {B}anach lattices, vol.~75 of Oper. Theory Adv. Appl.,
  Birkh\"{a}user, Basel, 1995, pp.~33--38.

\bibitem{Bhatia1990}
{\sc R.~Bhatia and F.~Kittaneh}, {\em On the singular values of a product of
  operators}, SIAM J. Matrix Anal. Appl., 11 (1990), pp.~272--277.

\bibitem{Braman2010}
{\sc K.~Braman}, {\em Third-order tensors as linear operators on a space of
  matrices}, Linear Algebra Appl., 433 (2010), pp.~1241--1253.

\bibitem{Carroll1970}
{\sc J.~Carroll and J.~Chang}, {\em Analysis of individual differences in
  multidimensional scaling via an n-way generalization of {“Eckart-Young”}
  decomposition}, Psychometrika, 35 (1970), pp.~283--319.

\bibitem{Chang2021_1}
{\sc S.~Y. Chang}, {\em T-product tensors--{P}art {I}: Inequalities}, preprint,
  arXiv: 2107.06285 (2021).

\bibitem{Chang2021_2}
{\sc S.~Y. Chang}, {\em T-product tensors--{P}art {II}: Tail bounds for sums of
  random t product tensors}, preprint, arXiv: 2107.06224 (2021).

\bibitem{Horn1991}
{\sc R.~Horn and C.~R. Johnson}, {\em Topics in matrix analysis}, Cambridge
  University Press, 1991.

\bibitem{Schur1994}
{\sc K.~D. Ikramov}, {\em A simple proof of the generalized {S}chur
  inequality}, Linear Algebra Appl., 199 (1994), pp.~143--149.

\bibitem{Kilmer2013}
{\sc M.~Kilmer, K.~Braman, N.~Hao, and R.~Hoover}, {\em Third-order tensors as
  operators on matrices: A theoretical and computational framework with
  applications in imaging}, SIAM J. Matrix Anal. Appl., 34 (2013),
  pp.~148--172.

\bibitem{Kilmer2011}
{\sc M.~Kilmer and C.~Martin}, {\em Factorization strategies for third-order
  tensors}, Linear Algebra Appl., 435 (2011), pp.~641--658.

\bibitem{Liu_Jin2021}
{\sc W.~Liu and X.~Jin}, {\em A study on {T}-eigenvalues of third-order
  tensors}, Linear Algebra Appl., 612 (2021), pp.~357--374.

\bibitem{Liu2018}
{\sc Y.~Liu, L.~Chen, and C.~Zhu}, {\em Improved robust tensor principal
  component analysis via low-rank core matrix}, IEEE J. Sel. Top. Signal
  Process., 12 (2018), pp.~1378--1389.

\bibitem{Lu2020}
{\sc C.~Lu, J.~Feng, Y.~Chen, W.~Liu, Z.~Lin, and S.~Yan}, {\em Tensor robust
  principal component analysis with a new tensor nuclear norm}, IEEE Trans.
  Pattern Anal. Mach. Intell., 42 (2020), pp.~925--938.

\bibitem{Lund2021}
{\sc K.~Lund}, {\em The tensor t-function: a definition for functions of
  third-order tensors}, Numer. Linear Algebra Appl., 27 (2020), pp.~e2288, 17.

\bibitem{Martin2013}
{\sc C.~Martin, R.~Shafer, and B.~Larue}, {\em An order-p tensor factorization
  with applications in imaging}, SIAM J. Sci. Comput., 35 (2013),
  pp.~A474--A490.

\bibitem{Miao2020}
{\sc Y.~Miao, L.~Qi, and Y.~Wei}, {\em Generalized tensor function via the
  tensor singular value decomposition based on the {T}-product}, Linear Algebra
  Appl., 590 (2020), pp.~258--303.

\bibitem{Miao_Qi2021}
{\sc Y.~Miao, L.~Qi, and Y.~Wei}, {\em T-{J}ordan canonical form and
  {T}-{D}razin inverse based on the {T}-product}, Commun. Appl. Math. Comput.,
  3 (2021), pp.~201--220.

\bibitem{Sun2019}
{\sc W.~Sun, L.~Huang, H.~C. So, and J.~Wang}, {\em Orthogonal tubal rank-1
  tensor pursuit for tensor completion}, Signal Process., 157 (2019),
  pp.~213--224.

\bibitem{Tarzanagh2018}
{\sc D.~A. Tarzanagh and G.~Michailidis}, {\em Fast randomized algorithms for
  t-product based tensor operations and decompositions with applications to
  imaging data}, SIAM J. Imaging Sci., 11 (2018), pp.~2629--2664.

\bibitem{Tucker1966}
{\sc L.~Tucker}, {\em Some mathematical notes on three-mode factor analysis},
  Psychometrika., 31 (1966), pp.~279--311.

\bibitem{Zhan2002}
{\sc X.~Zhan}, {\em Matrix Inequalities}, Lecture Notes in Mathematics, Vol.
  1790. Springer, Berlin, 2002.

\bibitem{Zhang2014Novel}
{\sc Z.~Zhang, G.~Ely, S.~Aeron, N.~Hao, and M.~Kilmer}, {\em Novel methods for
  multilinear data completion and de-noising based on tensor-svd}, 2014 IEEE
  Conference on Computer Vision and Pattern Recognition,  (2014),
  pp.~3842--3849.

\bibitem{Zheng2021}
{\sc M.-M. Zheng, Z.-H. Huang, and Y.~Wang}, {\em T-positive semidefiniteness
  of third-order symmetric tensors and {T}-semidefinite programming}, Comput.
  Optim. Appl., 78 (2021), pp.~239--272.

\end{thebibliography}

\end{document}